\DeclareExpandableDocumentCommand{\floor}{m}
 {
  \fp_eval:n { floor ( #1 ) }
 }
\newtheorem{theorem}{Theorem}[section]
\newtheorem{corollary}[theorem] {Corollary}
\newtheorem{definition}[theorem]{Definition}
\newtheorem{example}[theorem]{Example}
\newtheorem{lemma}[theorem]{Lemma}
\newtheorem{proposition}[theorem]{Proposition}
\newtheorem{remark}[theorem]{Remark}
\newtheorem{result}[theorem]{Result}
\newtheorem{claim}[theorem]{Claim}
\tikzset{edgee/.style = {> = latex'}}
\newcolumntype{P}[1]{>{\centering\arraybackslash}p{#1}}
\newcolumntype{M}[1]{>{\centering\arraybackslash}m{#1}}
\newcommand\R{\mathbb{R}}
\newcommand\Z{\mathbb{Z}}
\newcommand{\A}{\mathcal{A}}
\newcommand{\B}{\mathcal{B}}
\newcommand{\C}{\mathcal{C}}
\newcommand{\Hy}{\mathcal{H}}
\newcommand{\ipa}{\mathrm{L}}
\newcommand{\lt}[2]{\ensuremath{\alpha_{#1}^{(#2)}}}
\begin{document}

\title[Refinements of the braid arrangement]{Refinements of the braid arrangement and two parameter Fuss-Catalan numbers}
\author{Priyavrat Deshpande}
\address{Chennai Mathematical Institute}
\email{pdeshpande@cmi.ac.in}
\author{Krishna Menon}
\address{Chennai Mathematical Institute}
\email{krishnamenon@cmi.ac.in}
\author{Writika Sarkar}
\address{Chennai Mathematical Institute}
\email{writika@cmi.ac.in}
\thanks{PD and KM are partially supported by a grant from the Infosys Foundation}

\begin{abstract}
%OPTION 1: A hyperplane arrangement in $\mathbb{R}^n$ is a finite collection of affine hyperplanes.
%In this paper, we study a class of arrangements that contain the coordinate hyperplanes along with hyperplanes parallel to subspaces whose defining equations are of the form $x_i=a^kx_j$ for some fixed $a>1$.
%Specific arrangements of this form have been studied earlier by Seo.
%This class of arrangements is a generalization of the class called deformations of the braid arrangement, which was recently studied in great detail by Bernardi.
%We compute the characteristic polynomial for several such arrangements as well as provide a bijective correspondence between their regions and certain decorated Dyck paths.

%OPTION 2: 
A hyperplane arrangement in $\mathbb{R}^n$ is a finite collection of affine hyperplanes.
Counting regions of hyperplane arrangements is an active research direction in enumerative combinatorics.
In this paper, we consider the arrangement $\mathcal{A}_n^{(m)}$ in $\mathbb{R}^n$ given by $\{x_i=0 \mid i \in [n]\} \cup \{x_i=a^kx_j \mid k \in [-m,m], 1\leq i<j \leq n\}$ for some fixed $a>1$.
%The arrangement $\mathcal{A}_n^{(1)}$ and some of its sub-arrangements have been studied earlier by Seo.
It turns out that this family of arrangements is closely related to the well-studied extended Catalan arrangement of type $A$.
We prove that the number of regions of $\mathcal{A}_n^{(m)}$ is a certain generalization of Catalan numbers called two parameter Fuss-Catalan numbers. 
We then exhibit a bijection between these regions and certain decorated Dyck paths.
We also compute the characteristic polynomial and give a combinatorial interpretation for its coefficients.
Most of our results also generalize to sub-arrangements of $\mathcal{A}_n^{(m)}$ by relating them to deformations of the braid arrangement.
\end{abstract}

\keywords{hyperplane arrangement, finite field method, Dyck path, Fuss-Catalan.}
\subjclass[2020]{52C35, 05C30}
\maketitle

% \textcolor{red}{(K: Stuff to do:
% \begin{enumerate}
%     \item Abstract
%     \item Check references, remove unnecessary ones.
%     \item Delete comments.
%     \item Make sure no repeated/contradictory definitions.
% \end{enumerate})}

\section{Introduction}\label{sec1}
A \emph{hyperplane arrangement} $\A$ is a finite collection of affine hyperplanes (i.e., codimension $1$ subspaces and their translates) in $\R^n$. 
Without loss of generality we assume that arrangements we consider are \emph{essential}, i.e., the subspace spanned by the normal vectors is the ambient vector space.   
%Removing all the hyperplanes of $\A$ leaves $\R^n$ disconnected; counting the number of connected components using diverse combinatorial methods is an active area of research. 
A \emph{flat} of $\A$ is a nonempty intersection of some of the hyperplanes in $\A$; the ambient vector space is a flat since it is an intersection of no hyperplanes. 
Flats are naturally ordered by reverse set inclusion; the resulting poset is called the \emph{intersection poset} and is denoted by $\ipa(\A)$. 
A \emph{region} of $\A$ is a connected component of $\R^n\setminus \bigcup \A$. 
Counting the number of regions of arrangements using diverse combinatorial methods is an active area of research. 

The \emph{characteristic polynomial} of $\A$ is defined as 
\[\chi_{\A}(t) := \sum_{x\in\ipa(\A)} \mu(\hat{0},x)\, t^{\dim(x)}\]
where $\mu$ is the M\"obius function of the intersection poset and $\hat{0}$ corresponds to the flat $\R^n$. 
Using the fact that every interval of the intersection poset of an arrangement is a geometric lattice, we have
\begin{equation}\label{charform}
    \chi_\A(t) = \sum_{i=0}^n (-1)^{n-i} c_i t^i
\end{equation}
where $c_i$ is a non-negative integer for all $0 \leq i \leq n$ \cite[Corollary 3.4]{stanarr07}.
The characteristic polynomial is a fundamental combinatorial and topological invariant of the arrangement and plays a significant role throughout the theory of hyperplane arrangements.

In this article, our focus is on the enumerative aspects of (rational) arrangements in $\R^n$. 
In that direction we have the following seminal result by Zaslavsky.

\begin{theorem}[\cite{zas75}]\label{zaslavsky}
Let $\A$ be an arrangement in $\R^n$. Then the number of regions of $\A$ is given by 
\begin{align*}
   r(\A) &= (-1)^n \chi_{\A}(-1) \\
         &= \sum_{i=0}^n c_i. 
\end{align*}
\end{theorem}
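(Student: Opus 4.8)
The plan is to prove the first equality $r(\A)=(-1)^n\chi_\A(-1)$ by induction on the number of hyperplanes, using the \emph{deletion--restriction} operation; the second equality is then immediate. Fix $H_0\in\A$ and set $\A'=\A\setminus\{H_0\}$ (the \emph{deletion}) and $\A''=\{H\cap H_0\mid H\in\A',\ H\cap H_0\neq\emptyset\}$, regarded as an arrangement inside the $(n-1)$-dimensional affine space $H_0$ (the \emph{restriction}). I would first dispose of the second equality, which is purely formal: evaluating \eqref{charform} at $t=-1$ gives $\chi_\A(-1)=\sum_{i=0}^n(-1)^{n-i}c_i(-1)^i=(-1)^n\sum_{i=0}^n c_i$, so $(-1)^n\chi_\A(-1)=\sum_{i=0}^n c_i$ with no further work. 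It thus remains to prove $r(\A)=(-1)^n\chi_\A(-1)$.

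The heart of the argument is the geometric recurrence $r(\A)=r(\A')+r(\A'')$. To prove it, observe that reinserting $H_0$ into $\A'$ leaves a region $R$ of $\A'$ unchanged when $R\cap H_0=\emptyset$, and splits $R$ into exactly two regions of $\A$ when $R\cap H_0\neq\emptyset$ (since $R$ is open and convex, it then has points strictly on both sides of $H_0$); hence $r(\A)=r(\A')+N$, where $N$ is the number of regions of $\A'$ that meet $H_0$. The key step --- the one I expect to be the main obstacle --- is to identify $N$ with $r(\A'')$ via the map $R\mapsto R\cap H_0$. This map is injective because distinct regions of $\A'$ are disjoint; its values are regions of $\A''$ because $R$ is convex and avoids every $H\in\A'$, so $R\cap H_0$ is convex and avoids every trace $H\cap H_0$; it is surjective because, since $H_0\notin\A'$, every point of a region of $\A''$ already avoids $\bigcup\A'$ and so lies in some region $R$ of $\A'$ that meets $H_0$; and a short convexity argument (any segment inside $H_0$ joining a point of $R\cap H_0$ to a point of the same $\A''$-region avoids $\bigcup\A'$) shows $R\cap H_0$ is all of that $\A''$-region, not a proper subset. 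Hence $N=r(\A'')$.

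Next I would invoke the companion recurrence for the characteristic polynomial, $\chi_\A(t)=\chi_{\A'}(t)-\chi_{\A''}(t)$, which is the standard behaviour of the intersection poset under deletion--restriction: the flats of $\A$ not contained in $H_0$ are precisely the flats $Y$ of $\A'$ with $Y\not\subseteq H_0$, and each such $Y$ has the same M\"obius value in $\ipa(\A)$ and in $\ipa(\A')$ because their lower intervals coincide (any flat containing $Y$ also fails to be contained in $H_0$); the flats of $\A$ contained in $H_0$ form a copy of $\ipa(\A'')$, with dimensions preserved; summing $\mu(\hat0,\cdot)\,t^{\dim(\cdot)}$ over these two classes and comparing with $\chi_{\A'}$ and $\chi_{\A''}$ yields the identity. (Alternatively one applies Whitney's theorem $\chi_\A(t)=\sum_{\bigcap\B\neq\emptyset}(-1)^{|\B|}t^{\dim\bigcap\B}$ and splits the sum according to whether $H_0\in\B$.)

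Finally, combine everything by induction on $|\A|$, the statement being understood for arrangements in every Euclidean space. For the empty arrangement $\chi_\A(t)=t^n$ and $r(\A)=1=(-1)^n\chi_\A(-1)$. For the inductive step, $\A'$ is an arrangement in $\R^n$ with one fewer hyperplane and $\A''$ is an arrangement in $\R^{n-1}$ (after identifying $H_0$ with its ambient affine span; if $\A''$ is not essential, use the factorization $\chi(t)=t^{k}\chi_{\mathrm{ess}}(t)$, which multiplies both sides of the target identity by the same power of $t$), so $|\A'|,|\A''|<|\A|$ and by the inductive hypothesis together with the two recurrences
\begin{align*}
r(\A)&=r(\A')+r(\A'')=(-1)^n\chi_{\A'}(-1)+(-1)^{n-1}\chi_{\A''}(-1)\\
&=(-1)^n\bigl(\chi_{\A'}(-1)-\chi_{\A''}(-1)\bigr)=(-1)^n\chi_\A(-1),
\end{align*}
and the equality $r(\A)=\sum_{i=0}^n c_i$ follows as noted in the first paragraph.
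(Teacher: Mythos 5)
The paper does not prove this statement at all: it is quoted as Zaslavsky's theorem directly from \cite{zas75}, and the only thing the surrounding text supplies is the observation (via \eqref{charform}) that $(-1)^n\chi_{\A}(-1)=\sum_i c_i$, which you also dispose of correctly in one line. Your argument for the substantive equality $r(\A)=(-1)^n\chi_{\A}(-1)$ is the standard deletion--restriction proof (as in Stanley's lecture notes \cite{stanarr07}), and it is sound: the geometric recurrence $r(\A)=r(\A')+r(\A'')$ is justified carefully, including the only genuinely delicate point, namely that $R\mapsto R\cap H_0$ is a bijection from the regions of $\A'$ meeting $H_0$ onto the regions of $\A''$ (your convexity argument for surjectivity onto \emph{whole} regions of $\A''$ is exactly what is needed). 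The companion identity $\chi_\A(t)=\chi_{\A'}(t)-\chi_{\A''}(t)$ is correctly reduced to the two standard facts about $\ipa(\A)$ (flats not contained in $H_0$ have unchanged lower intervals, flats contained in $H_0$ form a copy of $\ipa(\A'')$); the one place your sketch is thinnest is the identification of $\sum_{Y\subseteq H_0}\mu(\hat0,Y)t^{\dim Y}$ with $-\chi_{\A''}(t)$, but your Whitney-sum alternative closes that gap cleanly (grouping subsets $\B'\subseteq\A'$ by their trace set on $H_0$ and using $\sum_{j\ge1}\binom{n_K}{j}(-1)^j=-1$). The induction, base case, and the remark that essentiality is irrelevant to the recurrences are all fine. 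In short: correct, and it supplies a proof where the paper only supplies a citation.
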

% \begin{theorem}[\cite{zas75}] \label{zaslavsky}
% Let $\A$ be an arrangement in $\R^n$. Then the number of regions of $\A$ is given by 
% \[ r(\A) = (-1)^n \chi_{\A}(-1)\]
% and the number of bounded regions is given by 
% \[b(\A) = (-1)^n \chi_{\A}(1). \]
% \end{theorem}

The finite field method,  developed by Athanasiadis \cite{athanas96}, converts the computation of the characteristic polynomial to a point counting problem. 
A combination of these two results allowed for the computation of the number of regions of several arrangements of interest. 

Another way to count the number of regions is to give a bijective proof. 
This approach involves finding a combinatorially defined set whose elements are in bijection with the regions of the given arrangement and are easier to count. 
For example, the \textit{braid arrangement} in $\R^n$ is given by
\begin{equation*}
    \{x_i-x_j=0 \mid 1 \leq i < j \leq n\}.
\end{equation*}
It is straightforward to verify that its regions correspond to the permutations of $[n]$.
Hence the number of regions of the braid arrangement in $\R^n$ is $n!$.
% The regions of the Shi arrangement (whose hyperplanes are given by the equations $x_i-x_j = 0, 1$ for $1\leq i< j\leq n$) are in bijection with the parking functions on $[n]$, hence the number of regions is $(n+1)^{n-1}$. 
% Refer to Stanley's notes \cite[Lecture 5]{stanarr07} for details.

Another important class of arrangements is the extended Catalan arrangements.
For any $m,n \geq 1$, the $m$-Catalan arrangement in $\R^n$ is given by
\begin{equation*}
    \C_n^{(m)} := \{x_i-x_j=k \mid k \in [-m,m], 1 \leq i < j \leq n\}.
\end{equation*}
These arrangements and their sub-arrangements have been studied in great detail by Bernardi in \cite{ber}.
It is well-known that the number of regions of $\C_n^{(m)}$ where $x_1<x_2<\cdots<x_n$ (also known as the dominant regions) is given by
\begin{equation*}
    A_n(m,1):=\frac{1}{n(m+1)+1}\binom{n(m+1)+1}{n}.
\end{equation*}
Using this, it is easy to see that $r(\C_n^{(m)})=n! \cdot A_n(m,1)$.
The numbers $A_n(m,1)$ are called the extended Catalan numbers or the Fuss-Catalan numbers.

The two parameter Fuss-Catalan numbers or the Raney numbers are given by
\begin{equation*}
    A_n(m,r) := \frac{r}{n(m+1)+r}\binom{n(m+1)+r}{n}
\end{equation*}
for all positive integers $n,m,r$.
% \textcolor{red}{(K: Should we change the definition to
% \begin{equation*}
%     A_n(m,r) := \frac{r}{n(m-1)+r}\binom{n(m-1)+r}{n}
% \end{equation*}
% so that the number of regions of $\C_n^{(m)}$ is $n! \cdot A_n(m,1)$ and of $\A_n^{(m)}$ is $n! \cdot A_n(m,2)$?)}
% \textcolor{blue}{(P: Agreed. Let us make this change uniformly across the paper.)}
% \textcolor{cyan}{(W: In that case, if I'm not wrong, shouldn't it be $m+1$ instead of $m-1$ in the definition?)}
% \textcolor{red}{(K:Yes, you're right. I'll make the change.)}
These numbers were first studied by Raney in \cite{raney}.

Just as the extended Catalan arrangement works for $r=1$, one could ask if, for each $r > 1$, there is a nice family of arrangements whose regions are given by multiples of $A_n(m,r)$, preferably $n! \cdot A_n(m,r)$.
In the present article we study a family of arrangements that works for $r=2$.
For any $m,n \geq 1$, the arrangement in $\R^n$ given by
\begin{equation}\label{arrdef}
    \A_n^{(m)}:=\{x_i=0 \mid i \in [n]\} \cup \{x_i=a^kx_j \mid k \in [-m,m], 1\leq i<j \leq n\}
\end{equation}
where $a>1$ is a fixed real number.
In \Cref{sec2}, we show that $a$ does not affect the combinatorics of the arrangement.
The arrangement $\A_n^{(m)}$ is a generalization of the arrangement $\alpha_n$ (which is $\A_n^{(1)}$ with $a=2$) defined in \cite{seo}.

The usual Catalan numbers are given by $A_n(1,1)$. 
They occur in various counting problems; in fact, a vast variety of combinatorial objects are counted by these numbers. 
For example, the reader can look at the book \cite{stancat15} by Stanley which contains more than 200 interpretations of Catalan numbers. 
The extended Catalan numbers also count many interesting objects. 
Other than the regions of $\C_n^{(m)}$, the following objects are also counted by $n!\cdot A_n(m, 1)$:
\begin{enumerate}
    \item Certain non-nesting partitions (follows from \cite[Theorem 2.2]{athanasiadisnesting}) or an equivalent formulation called $m$-sketches (introduced by Bernardi \cite[Section 8]{ber}). 
    \item Labeled $m$-ary trees (Bernardi \cite{ber} and Stanley \cite[part (b) of A14]{stancat15}).
    \item Generalized Dyck paths (\cite[part (c) of A14]{stancat15}).
\end{enumerate}
There are well-known bijections between the regions of $\C_n^{(m)}$ and these combinatorial objects (for example, see \cite[Section 8.1]{ber}). %\textcolor{red}{(K: Would this line be enough instead of the references for each object?)}
An outcome of our work is a generalization of these bijections. 
In particular, we introduce analogs of above objects which correspond to the regions of $\A_n^{(m)}$. 
%\textcolor{blue}{(P: This is a vague background. I will modify and restructure it.)}

When studying an arrangement, another interesting question is whether the coefficients of its characteristic polynomial can be combinatorially interpreted.
By \Cref{zaslavsky}, we know that the sum of the absolute values of the coefficients is the number of regions.
Hence, one could ask if there is a statistic on the regions whose distribution is given by the coefficients of the characteristic polynomial.
The characteristic polynomial of the braid arrangement in $\R^n$ is 
$t(t-1)\cdots(t-n+1)$ \cite[Corollary 2.2]{stanarr07}. 
Hence, the coefficients are the Stirling numbers of the first kind.
Consequently, the distribution of the statistic `number of cycles' on the set of permutations of $[n]$ (which correspond to the regions of the arrangement) is given by the coefficients of the characteristic polynomial.

The main aim of this paper is to study the arrangement $\A_n^{(m)}$ and its sub-arrangements.
In \Cref{sec2}, we compute the characteristic polynomial of $\A_n^{(m)}$.
We also relate the characteristic polynomial of $\A_n^{(m)}$ to that of $\C_n^{(m)}$.
In \Cref{sec3}, we describe certain combinatorial objects called decorated Dyck paths and decorated non-nesting partitions and show that they are in bijection with the regions of $\A_n^{(m)}$.
Finally in \Cref{sec4}, we describe a statistic on decorated Dyck paths whose distribution is given by the coefficients of the characteristic polynomial of $\A_n^{(m)}$.
In all sections, we also show that the results generalize to several sub-arrangements of $\A_n^{(m)}$.

\section{The Characteristic Polynomial}\label{sec2}

We begin by stating the finite field method developed by Athanasiadis \cite{athanas96}.

\begin{theorem}[{\cite[Theorem 2.2]{athanas96}}]\label{ffm} Let $\A$ be an arrangement in $\R^n$ defined over the integers and $q$ be a large enough prime number. Then 
\[ \chi_{\A}(q) = \#\left( \Z_q^n \setminus V_{\A} \right)\]
where $V_{\A}$ is the union of the hyperplanes in $\Z_q^n$ obtained by reducing $\A$ mod $q$.

\end{theorem}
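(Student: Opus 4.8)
The plan is to write both sides as signed sums indexed by sub-collections of $\A$ and match them term by term. On the combinatorial side I would invoke \emph{Whitney's theorem} (equivalently, Rota's crosscut theorem applied inside each interval of $\ipa(\A)$): because every interval of the intersection poset is a geometric lattice, $\mu(\hat 0,x)=\sum_{\B\,:\,\bigcap\B\,=\,x}(-1)^{|\B|}$, and multiplying by $t^{\dim x}$ and summing over flats $x$ gives
\[
\chi_\A(t) \;=\; \sum_{\substack{\B\subseteq\A\\ \bigcap\B\neq\emptyset}} (-1)^{|\B|}\, t^{\dim\left(\bigcap\B\right)}.
\]

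On the arithmetic side, each $H\in\A$ is cut out by a linear equation with integer coefficients, so once $q$ does not divide the content of that equation it reduces mod $q$ to an honest hyperplane $\bar H\subseteq\Z_q^n$; inclusion–exclusion over the sets $\bar H$ then gives
\[
\#\bigl(\Z_q^n\setminus V_\A\bigr) \;=\; \sum_{\B\subseteq\A}(-1)^{|\B|}\,\#\Bigl(\bigcap_{H\in\B}\bar H\Bigr).
\]
To identify this with $\chi_\A(q)$ it is enough to prove that, for every prime $q$ outside a suitable finite set and every $\B\subseteq\A$: first, $\bigcap_{H\in\B}\bar H\neq\emptyset$ if and only if $\bigcap_{H\in\B}H\neq\emptyset$; and second, when nonempty, $\#\bigcap_{H\in\B}\bar H=q^{\,\dim(\bigcap_{H\in\B}H)}$. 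Then the empty-intersection terms of the second display drop out, leaving exactly the Whitney sum evaluated at $t=q$. Writing $\B$'s system as $M_\B x=b_\B$ with $M_\B$, $b_\B$ integral, solvability over $\Q$ and the solution dimension $n-\mathrm{rank}\,M_\B$ are detected by the vanishing/non-vanishing of finitely many integer minors of $M_\B$ and $[M_\B\mid b_\B]$; and reducing the augmented matrix to echelon form whose pivots divide a fixed nonzero integer then pins down the cardinality $q^{\dim}$ of the solution set over $\Z_q$ as well.

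The crux — and the only place real care is needed — is the uniformity in $q$: one must pick the threshold ``$q$ large enough'' so that reduction mod $q$ never causes a rank to drop, nor turns an inconsistent system consistent, for \emph{all} the finitely many $\B\subseteq\A$ at once. Taking $q$ coprime to (or larger than) the product of the finitely many nonzero minors that occur accomplishes this; the rest is routine inclusion–exclusion bookkeeping. This is essentially Athanasiadis's original argument, a descendant of the Crapo–Rota critical-problem machinery.
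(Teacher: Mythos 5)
The paper gives no proof of this statement---it is quoted directly from Athanasiadis \cite[Theorem 2.2]{athanas96}---and your sketch correctly reconstructs that original argument: Whitney's expansion of $\chi_\A$ over central subarrangements, inclusion--exclusion over the reduced hyperplanes in $\Z_q^n$, and the observation that for $q$ avoiding the finitely many primes dividing the relevant minors of the (augmented) coefficient matrices, consistency and rank are preserved mod $q$, so each central $\B$ contributes $q^{\dim(\cap\B)}$ and the non-central terms vanish on both sides. The proposal is correct, and correctly identifies the uniformity in $q$ as the only delicate point.
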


We will use this result to find the characteristic polynomial for the arrangement $\A_n^{(m)}$. First we show that the actual value of $a$ in \eqref{arrdef} does not affect the combinatorics of the arrangement $\A_n^{(m)}$ by showing that $a$ does not affect the order of the intersection poset $\ipa(\A_n^{(m)})$.

\begin{proposition}\label{nodepa} The Hasse diagram for the intersection poset $\ipa(\A_n^{(m)})$ is independent of $a$.
\end{proposition}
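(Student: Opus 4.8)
The plan is to show that the combinatorial type of the intersection lattice does not depend on the choice of $a > 1$ by exhibiting an explicit rescaling that maps the flats of $\A_n^{(m)}$ for one value of $a$ onto those for another. First I would observe that every hyperplane of $\A_n^{(m)}$ is \emph{homogeneous} (it passes through the origin): indeed $\{x_i = 0\}$ and $\{x_i = a^k x_j\}$ are all linear. So $\A_n^{(m)}$ is a central arrangement, and a convenient way to encode its flats is to pass to the logarithmic coordinates $y_i = \log_a |x_i|$ on each open orthant. In these coordinates the hyperplane $x_i = a^k x_j$ becomes (within a fixed orthant, after a sign bookkeeping) the affine hyperplane $y_i - y_j = k$, which is manifestly \emph{independent of $a$} — the dependence on $a$ has been absorbed into the coordinate change, which is a linear isomorphism for each fixed $a$. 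This already strongly suggests the result; the work is to make the "orthant-by-orthant" bookkeeping precise.

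More directly, here is the argument I would actually write. Fix $a, b > 1$ and let $\phi_{a,b} \colon \R^n \to \R^n$ be the map that acts on the orthant where $x$ has sign vector $\varepsilon \in \{+,-,0\}^n$ by $x_i \mapsto \varepsilon_i \, |x_i|^{\log_b a}$ (with the convention $0 \mapsto 0$). This is a bijection of $\R^n$ — it is a homeomorphism, being a continuous monotone bijection on each coordinate axis's worth of data, and it preserves each coordinate hyperplane $\{x_i = 0\}$ and each open orthant. The key computation is that on the orthant with sign vector $\varepsilon$, the equation $x_i = a^k x_j$ is equivalent (when $\varepsilon_i, \varepsilon_j$ are both nonzero and equal, which is forced on the locus $x_i = a^k x_j$ for $k$ possibly negative — here one uses $a>0$ so $x_i, x_j$ have the same sign) to $|x_i| = a^{k} |x_j|$, and applying $t \mapsto t^{\log_b a}$ to both sides turns this into $|x_i|^{\log_b a} = a^{k\log_b a}|x_j|^{\log_b a} = b^{k}\,|x_i|^{\log_b a}$'s partner, i.e. into the corresponding equation with $a$ replaced by $b$. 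Hence $\phi_{a,b}$ carries each hyperplane of $\A_n^{(m)}$ (built with parameter $a$) bijectively onto the corresponding hyperplane built with parameter $b$, and therefore carries flats to flats, preserving inclusions. Since $\phi_{a,b}$ and $\phi_{b,a}$ are mutually inverse, we get an isomorphism of intersection posets $\ipa(\A_n^{(m)}|_a) \cong \ipa(\A_n^{(m)}|_b)$, which is exactly the claim about Hasse diagrams.

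The step I expect to be the main obstacle — or at least the one requiring the most care — is the sign/degeneracy bookkeeping at the boundary: the map $\phi_{a,b}$ is \emph{not} linear, only piecewise-power-law across orthants, so I must check that it genuinely sends the \emph{whole} hyperplane $\{x_i = a^k x_j\}$ (a linear subspace, crossing many orthants and the coordinate subspaces) onto the whole hyperplane $\{x_i = b^k x_j\}$, and not just the parts in top-dimensional orthants. The clean way to handle this is to note that $\phi_{a,b}$ is continuous and bijective on all of $\R^n$, that it maps the open dense subset of $\{x_i = a^k x_j\}$ lying in open orthants onto the open dense subset of $\{x_i = b^k x_j\}$ lying in open orthants, and that both of these hyperplanes are closed; taking closures (and using that $\phi_{a,b}$ is a homeomorphism, hence closed) upgrades the equality on the dense part to equality of the full hyperplanes. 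An alternative, avoiding topology altogether, is to prove directly that $x \in \{x_i = a^k x_j\}$ iff $\phi_{a,b}(x) \in \{x_i = b^k x_j\}$ by checking the finitely many sign cases for $(\varepsilon_i, \varepsilon_j)$ by hand — when exactly one of $x_i, x_j$ is $0$ the equation forces the other to be $0$ as well (again since $a^k \neq 0$), so the only genuinely nontrivial cases are $\varepsilon_i = \varepsilon_j = +$ and $\varepsilon_i = \varepsilon_j = -$, both of which reduce to the one-line computation above. Either route gives the proposition; I would present the second, as it keeps everything elementary and self-contained.
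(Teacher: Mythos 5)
Your argument is correct in substance but takes a genuinely different route from the paper. You construct an explicit self-homeomorphism of $\R^n$ carrying the arrangement built with parameter $a$ onto the one built with parameter $b$, which immediately yields an isomorphism of intersection posets (and in fact of the whole face structure). The paper instead encodes each intersection $\cap\Hy$ by a labeled directed multigraph $G(\Hy)$, characterizes nonemptiness by a consistency condition on path-label sums, and characterizes containment of flats combinatorially; since none of these data mention $a$, the poset is independent of $a$. Your approach is shorter and proves something slightly stronger (a homeomorphism of pairs, hence equality of all the combinatorial invariants at once), while the paper's approach buys an explicit combinatorial model of $\ipa(\A_n^{(m)})$ together with the dimension of each flat (the number of loop-free components of $G(\Hy)$), which is the kind of description reused for the sub-arrangements $\A_n^S$. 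One concrete slip to fix: the exponent in your rescaling should be $\log_a b$, not $\log_b a$. With $\phi(x)_i=\operatorname{sgn}(x_i)\,|x_i|^{c}$ and $c=\log_a b$ one gets $|x_i|=a^k|x_j| \Rightarrow |x_i|^{c}=a^{kc}|x_j|^{c}=b^{k}|x_j|^{c}$ as desired, whereas $a^{k\log_b a}\neq b^k$ in general. Your sign bookkeeping (both coordinates on $\{x_i=a^kx_j\}$ share a sign since $a^k>0$, and if one vanishes so does the other) is the right way to see that the \emph{entire} hyperplane is carried onto its counterpart, so the elementary case-check you propose does close the argument.
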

\begin{proof}
We completely describe the intersection poset $\ipa(\A_n^{(m)})$ in terms of directed graphs.
This will show that the poset does not depend on the value of $a>1$.
The following proof is similar to that of \cite[Lemma 7.1]{ber}.

Let $\Hy:=\{H_1,\ldots,H_l\} \subseteq \A_n^{(m)}$.
We represent the intersection $\cap \Hy$, i.e., $H_1 \cap \cdots \cap H_l$ as a directed multi-graph on the vertex set $[n]$ which we call $G(\Hy)$.
If $x_i=a^kx_j$ for some distinct $i,j \in [n]$ and $k \in [m]$ is a hyperplane in $\Hy$, then an edge labeled $k$ is drawn from vertex $j$ to vertex $i$ representing the fact the multiplying $a^k$ to $x_j$ gives $x_i$.
Similarly, an edge labeled $-k$ is drawn from vertex $i$ to vertex $j$.
If $x_i=0$ then a loop is drawn on the vertex $i$.
After this is done for all hyperplanes in $\Hy$, if there is a connected component in the directed graph which has a vertex with a loop, then loops are given to all the vertices in the component and all other edges are deleted.

\begin{example}\label{nodepaex}
Let $\Hy \subseteq \A_6^{(4)}$ have the hyperplanes $x_1=0$, $x_1=a^2x_2$, $x_4=ax_3$, $x_5=a^3x_4$, and $x_5=a^4x_3$.
Then $G(\Hy)$ is shown is \Cref{ghex}.
\end{example}

\begin{figure}[H]
    \centering
    \begin{tikzpicture}
    \node (6) [circle,draw=black,inner sep=2pt] at (-3.5,-1.5) {$6$};
    \node (4) [circle,draw=black,inner sep=2pt] at (-1,0) {$4$};
    \node (3) [circle,draw=black,inner sep=2pt] at (3,0) {$3$};
    \node (2) [circle,draw=black,inner sep=2pt] at (5.5,-0.75) {$2$};
    \node (1) [circle,draw=black,inner sep=2pt] at (5.5,-2.25) {$1$};
    \node (5) [circle,draw=black,inner sep=2pt] at (1,-3) {$5$};
    \draw [<-,edgee] (4) to [bend left=15pt] node[above] {\tiny $1$} (3);
    \draw [<-,edgee] (3) to [bend left=15pt] node[above] {\tiny $-1$} (4);
    \draw [<-,edgee] (5) to [bend left=15pt] node[right] {\tiny $4$} (3);
    \draw [<-,edgee] (3) to [bend left=15pt] node[right] {\tiny $-4$} (5);
    \draw [<-,edgee] (4) to [bend left=15pt] node[above] {\tiny $-3$} (5);
    \draw [<-,edgee] (5) to [bend left=15pt] node[above] {\tiny $3$} (4);
    \path (2) edge [loop above] (2);
    \path (1) edge [loop above] (1);
    \end{tikzpicture}
    \caption{$G(\Hy)$ for $\Hy$ given in \Cref{nodepaex}.}
    \label{ghex}
\end{figure}

\begin{claim}\label{cl1}
$\cap \Hy$ will be non-empty if and only if any two directed paths between vertices $i$ and $j$ in $G(\Hy)$ have labels with the same sum for all distinct $i,j \in [n]$.
In this case, we call $G(\Hy)$ \textit{consistent}.
\end{claim}

If $G(\Hy)$ is consistent, a point in the intersection can be constructed as follows.
First, all coordinates whose corresponding vertices have loops are set to zero.
For any component without a loop, fix an arbitrary value, say $c$, for some coordinate, say $x_i$, in the component.
For any other coordinate $x_j$ in the same component, consider a path from vertex $i$ to $j$ and suppose the sum of the labels on this path is $k$.
Then set $x_j=a^kc$.
By the property satisfied by $G(\Hy)$, this gives a point in $\cap \Hy$.
It is also clear that if $\cap \Hy$ is non-empty, then $G(\Hy)$ should be consistent.
This also shows that the dimension of $\cap \Hy$ in such a case is the number of components of $G(\Hy)$ that are not loops.

From the above discussion, we have the following characterization of the poset structure of $\ipa(\A_n^{(m)})$.
\begin{claim}\label{cl2}
If $G(\Hy_1)$ and $G(\Hy_2)$ are consistent, then $\cap \Hy_1 \subseteq \cap \Hy_2$ if and only if
\begin{enumerate}
    \item all looped vertices in $G(\Hy_2)$ are looped in $G(\Hy_1)$, and 
    \item for any distinct $i,j \in [n]$, if a path from $i$ to $j$ in $G(\Hy_2)$ has labels whose sum is $k$, then so does any path from $i$ to $j$ in $G(\Hy_1)$.
\end{enumerate}
\end{claim}

It now follows from \Cref{cl1} and \Cref{cl2} that $\ipa(\A_n^{(m)})$ does not depend on the value of $a$.
\end{proof}

Now we have the liberty to choose any $a$ convenient for us. To do so, we will take help of a result from number theory by Heath-Brown \cite{heath_brown}, which is a partial work towards a solution to Artin's conjecture on primitive roots, which states that every integer $k$, except $-1$ and a perfect square, is a primitive root modulo infinitely many primes. The main theorem of interest is stated below.

\begin{theorem}[{\cite[Corollary 2]{heath_brown}}]\label{primrootexistence}
There are at most three square-free positive integers for which Artin's conjecture fails.
\end{theorem}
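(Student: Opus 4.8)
The plan is to follow the argument of Heath-Brown, which itself builds on the earlier unconditional work of Gupta and Ram Murty. The starting point is the elementary reduction: a square-free integer $a>1$ is a primitive root modulo a prime $p\nmid a$ if and only if, for every prime $\ell$ dividing $p-1$, the element $a$ is \emph{not} an $\ell$-th power residue modulo $p$ (equivalently $a^{(p-1)/\ell}\not\equiv 1 \pmod p$). Writing $N_a(x)$ for the number of primes $p\le x$ at which $a$ fails to be a primitive root, it therefore suffices to prove a bound of the shape $N_a(x)\le(1-\delta)\,\pi(x)$ for some fixed $\delta>0$ and all large $x$, for all but at most three square-free values of $a$; Artin's conjecture for every other such $a$ then follows, and in fact in quantitative form.

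I would split the bad condition --- that $a$ is an $\ell$-th power residue mod $p$ for \emph{some} prime $\ell\mid p-1$ --- according to the size of $\ell$. For $\ell$ beyond a suitable threshold one exploits that an integer $p-1\le x$ has relatively few large prime factors, together with the Brun--Titchmarsh inequality applied to the progressions $p\equiv 1\pmod\ell$, to see that this range contributes $o(\pi(x))$. For $\ell$ in an intermediate range one invokes the large sieve inequality for the splitting conditions defining the Kummer fields $\Q(\zeta_\ell,a^{1/\ell})$. The stubborn range is that of \emph{small} primes $\ell$, say $\ell$ below a fixed bound $\xi$: this is precisely where Hooley's treatment uses GRH for the Dedekind zeta functions of the Kummer fields, and unconditionally one cannot hope to exclude a positive proportion of primes $p$ for a single $a$ --- already the primes $p\equiv 1\pmod 2$ at which $a$ is a quadratic residue make up half of all primes.

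The way around this obstacle, and the reason the exceptional count is \emph{three} rather than merely finite, is to treat several multiplicatively independent numbers at once. After reducing to multiplicatively independent primes $q_1,q_2,q_3$, suppose for contradiction that all three fail Artin's conjecture. Then for a positive proportion of primes $p$ there exist, for each $i$, a small prime $\ell_i\mid p-1$ with $q_i$ an $\ell_i$-th power residue mod $p$. One estimates the number of such $p$ by the Chebotarev density theorem in the compositum of the fields $\Q(\zeta_\ell,q_i^{1/\ell})$ --- where the relevant local densities multiply, because multiplicative independence of the $q_i$ forces the needed linear disjointness --- and sums over the finitely many triples $(\ell_1,\ell_2,\ell_3)$ with each $\ell_i<\xi$. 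The aggregate density this produces turns out to be strictly smaller than what the simultaneous failure of all three would require, a contradiction. Chasing the numerical constants through the large sieve bounds and these Chebotarev contributions is exactly what pins the bound at three square-free exceptions (and, with more care in the reduction to primes, two prime exceptions).

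The main obstacle is this small-$\ell$ range: it is the single point at which the unconditional argument must substitute a combinatorial-cum-sieve input --- the pigeonhole over a triple of multiplicatively independent numbers, fed into the large sieve --- for the analytic input (GRH for Kummer fields) that Hooley used. Making that substitution quantitatively sharp enough to yield the explicit constant $3$, rather than an unspecified finite bound, is the technical heart of the proof, and I would expect the bulk of the effort to lie there; the large-$\ell$ and intermediate-$\ell$ estimates are comparatively routine sieve work.
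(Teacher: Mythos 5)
This statement is not proved in the paper at all: it is quoted verbatim as Corollary~2 of Heath-Brown's 1986 paper, and the authors use it only as a black box to guarantee the existence of an integer $a$ that is a primitive root modulo infinitely many primes. So there is no in-paper argument to compare yours against; what can be assessed is whether your sketch would actually reconstitute Heath-Brown's theorem, and as written it would not.

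The gap is in the step you yourself identify as the technical heart. You propose to handle the small primes $\ell$ by estimating, via Chebotarev in the compositum of the Kummer fields $\Q(\zeta_{\ell_i}, q_i^{1/\ell_i})$, the density of primes $p$ at which each $q_i$ is an $\ell_i$-th power residue, summing over triples $(\ell_1,\ell_2,\ell_3)$ with $\ell_i<\xi$, and claiming the total is ``strictly smaller than what simultaneous failure would require.'' This does not close: simultaneous failure of Artin's conjecture for $q_1,q_2,q_3$ only requires that for all but finitely many $p$ there exist \emph{some} primes $\ell_i\mid p-1$ (small or large) witnessing the failure, and already the single triple $\ell_1=\ell_2=\ell_3=2$ contributes a genuinely positive density (three multiplicatively independent numbers are simultaneously quadratic residues for a positive proportion of primes), so no contradiction emerges from bounding the small-$\ell$ densities; controlling the inclusion–exclusion over \emph{all} $\ell$ is exactly where Hooley needs GRH. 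Heath-Brown's unconditional route is different: a lower-bound sieve (linear sieve with Bombieri–Vinogradov as level of distribution, plus Chen's switching device) produces $\gg x/\log^2 x$ primes $p$ in prescribed residue classes for which $(p-1)/2$ has \emph{no} small prime factor at all, so that the small-$\ell$ range is empty except for $\ell=2$; the case $\ell=2$ is handled by quadratic reciprocity and the hypothesis that none of $q,r,s,-3qr,-3qs,-3rs,qrs$ is a square; and the remaining few large prime factors of $p-1$ are handled by the Gupta–Murty counting argument, which bounds the number of primes modulo which a rank-three multiplicatively independent group reduces to a small subgroup by counting coincidences $q^ar^bs^c\equiv q^{a'}r^{b'}s^{c'}$. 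Without the almost-prime sieve input your outline has no mechanism for eliminating the small odd $\ell$, and the claimed contradiction does not follow from the ingredients listed.
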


In particular, this theorem guarantees the existence of a positive integer which is a primitive root modulo infinitely many primes. Hence we choose $a$ to be such an integer for our purpose. Now we are ready to find the characteristic polynomial for the arrangement $\A_n^{(m)}$.

\begin{proposition}\label{charpolyref} The characteristic polynomial of $\A_n^{(m)}$ is given by 
\[ \chi_{\A_n^{(m)}}(t) = (t-1)(t-mn-2)(t-mn-3)\dots (t-mn-n). \]

\end{proposition}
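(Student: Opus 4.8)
The plan is to apply the finite field method (\Cref{ffm}): fix $q$ to be a large prime for which our chosen $a$ is a primitive root modulo $q$ — such primes exist in infinitude by our choice of $a$ via \Cref{primrootexistence} — and count the points of $\Z_q^n$ that avoid every hyperplane of $\A_n^{(m)}$ reduced mod $q$. Since $a$ is a primitive root mod $q$, the powers $a^0, a^1, \dots, a^{q-2}$ run over all of $\Z_q^\times$, so the hyperplanes $x_i = a^k x_j$ for $k \in [-m,m]$ become genuine distinct constraints, and being able to identify nonzero residues with exponents mod $q-1$ is exactly what makes the count tractable.

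The key combinatorial step is to count tuples $(x_1, \dots, x_n) \in \Z_q^n$ with all $x_i \neq 0$ (that handles the hyperplanes $x_i = 0$) and with $x_i \neq a^k x_j$ for all $1 \le i < j \le n$ and all $k \in [-m,m]$. Writing each nonzero $x_i$ as $a^{e_i}$ with $e_i \in \Z/(q-1)\Z$, the forbidden conditions $x_i = a^k x_j$ translate to $e_i - e_j \equiv k \pmod{q-1}$; so I must count functions $e \colon [n] \to \Z/(q-1)\Z$ such that for every $i < j$ the difference $e_i - e_j$ avoids the $2m+1$ values $\{-m, \dots, m\}$ modulo $q-1$. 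I would build such tuples greedily in the order $e_1, e_2, \dots, e_n$: choose $e_1$ freely ($q-1$ ways), and when choosing $e_k$ having already placed $e_1, \dots, e_{k-1}$, the constraint from each earlier index $e_j$ forbids an interval of $2m+1$ residues around $e_j$. The crucial point — and this is where the structure really enters — is that for a valid partial tuple these $k-1$ forbidden intervals are pairwise disjoint, so exactly $(k-1)(2m+1)$ residues are excluded, leaving $(q-1) - (k-1)(2m+1)$ choices for $e_k$. Hence the total count is
\[
(q-1) \cdot \prod_{k=2}^{n} \big( (q-1) - (k-1)(2m+1) \big),
\]
and since $\chi_{\A_n^{(m)}}(t)$ agrees with this polynomial in $q$ at infinitely many primes $q$, it equals it identically; substituting $q = t$ and simplifying $(t-1) - (k-1)(2m+1) = t - 1 - (k-1)(2m+1)$ should be rearranged to match $(t - mn - k)$ after reindexing — I would double-check the arithmetic so the factors line up as $(t-1)(t-mn-2)\cdots(t-mn-n)$, possibly absorbing a shift of the summation variable.

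The main obstacle is justifying the disjointness claim, i.e., that when $e_1, \dots, e_{k-1}$ already form a valid configuration, the forbidden intervals of radius $m$ centered at the $e_j$'s (for $j<k$) do not overlap. This is precisely the condition that $e_i - e_j \not\in \{-2m, \dots, 2m\}$ for distinct earlier indices — but the validity hypothesis only directly forbids differences in $\{-m, \dots, m\}$. So I need the stronger separation $|e_i - e_j| > 2m$ (cyclically) to hold automatically, or else I must be more careful about how I order and count. The resolution is the transitivity-type observation used implicitly in the proof of \Cref{nodepa}: if $e_i$ and $e_j$ were within $2m$ of each other with, say, $e_i - e_j \in \{m+1, \dots, 2m\}$, then inserting a hypothetical third index would reveal an inconsistency — more concretely, one shows that a configuration counted this way genuinely avoids all hyperplanes, and conversely every avoiding configuration is counted exactly once, by checking that the greedy disjointness never fails for a legitimate point. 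I expect to argue that for a valid tuple the exponents, read in sorted cyclic order, have consecutive gaps each exceeding $2m$ — wait, that would be too strong for large $n$; rather, the correct statement is that the intervals $[e_j - m, e_j + m]$ are disjoint, which follows because $e_i = a^k e_j$-type collisions among already-placed coordinates are exactly what validity forbids. Pinning down this disjointness rigorously, handling the cyclic (mod $q-1$) wraparound, and confirming $q-1$ is large enough that no interval wraps onto itself, is the delicate heart of the argument; everything else is bookkeeping.
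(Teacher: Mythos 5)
There is a genuine gap, and it sits exactly where you suspected: the disjointness claim is false, and with it the whole greedy product collapses. Translating to exponents $e_i \in \Z/(q-1)\Z$, validity of a partial tuple $e_1,\dots,e_{k-1}$ only guarantees $e_i - e_j \notin \{-m,\dots,m\}$ for $i \neq j$, i.e.\ that no already-placed point lies in another's radius-$m$ interval. It does \emph{not} force the intervals $[e_j-m,\,e_j+m]$ themselves to be pairwise disjoint: if $e_1 = 0$ and $e_2 = m+1$ (a perfectly valid configuration), the two forbidden intervals overlap in $m$ residues, so the number of choices for $e_3$ is strictly larger than $(q-1)-2(2m+1)$. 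Your attempted rescue --- that interval overlaps are ``exactly what validity forbids'' --- conflates ``no point lies in another's interval'' (true) with ``no two intervals intersect'' (requires separation $>2m$, which validity does not give). The resulting formula is demonstrably wrong already at $n=3$: your product gives $(q-1)(q-2m-2)(q-4m-3)$, whose constant-in-$q$ data differs from the correct $(q-1)(q-3m-2)(q-3m-3)$; e.g.\ the trailing two factors have constant terms $(2m+2)(4m+3) = 8m^2+14m+6$ versus $(3m+2)(3m+3)=9m^2+15m+6$. So the greedy, coordinate-by-coordinate count overcounts nothing and undercounts nothing uniformly --- it simply is not the right combinatorial model.

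The fix is to count the \emph{set} of chosen exponents globally rather than sequentially. The paper places the $q-1$ residues on a circle and counts selections of $n$ positions with at least $m$ unchosen positions in every cyclic gap between consecutive chosen ones (this is the correct reformulation of pairwise cyclic distance $>m$), via the standard substitution $z_j = y_j - m$ on the gap sizes. That yields $\frac{q-1}{n}\binom{q-nm-2}{n-1}$ unordered selections, hence $(q-1)(n-1)!\binom{q-nm-2}{n-1} = (q-1)(q-nm-2)\cdots(q-nm-n)$ ordered tuples, which is the stated polynomial. The falling-factorial shape of the answer is an artifact of the binomial coefficient, not evidence of an independent-choices structure, which is why the greedy heuristic is misleading here. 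Everything else in your setup (choice of $a$ via Heath--Brown, passing to exponents mod $q-1$, the $2m+1$ forbidden differences, invoking the finite field method at infinitely many primes) matches the paper and is fine.
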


\begin{proof} Let $q>n$ be a large enough prime such that $a$ is a primitive root modulo $q$. By \Cref{ffm}, we know that $\chi_{\A_n^{(m)}}(q)=|S|$ where 
\[ S=\{ (x_1,\dots, x_n)\in \Z_q^n \mid x_i\neq 0, x_i\neq a^kx_j \text{ for all } i,j\in [n], i\neq j \text{ and } k\in [0,m]\}.\] 
Since $a$ is a generator for $\Z_q^\times = \{1,2,\dots, q-1\}$, we see that $1,a,a^2,\dots, a^{q-2}$ is a permutation of the elements $1,2,\dots, q-1$. Now we consider $q-1$ columns with $m+1$ entries each such that the $k$th column from the left contains the numbers $a^{k-1},a^k,\dots, a^{k+m-1}$ (where the exponents are taking mod $q-1$) as follows.

\[\begin{tabular}{ccccc}

$a^0$ & $a^1$ & $a^2$ & $\dots$ & $a^{q-2}$\\ 

$a^1$ & $a^2$ & $a^3$ & $\dots$ & $a^{q-1}$\\ 

$a^2$ & $a^3$ & $a^4$ & $\dots$ & $a^q$\\ 

$\vdots$ & $\vdots$ & $\vdots$ & $\dots$ & $\vdots$\\ 

$a^m$ & $a^{m+1}$ & $a^{m+2}$ & $\dots$ & $a^{m+q-2}$\\
\end{tabular}\]

If we call the topmost row the $0$th row, then note that 
\begin{itemize}
\item the $\ell$th row is $a^\ell$ times the corresponding numbers in the $0$th row,
\item the exponents in row $i$ is a cyclic shift of the exponents in row $i-1$ to the left by 1 step for $i>0$ when taken mod $q-1$, and hence row $i$ itself is a cyclic shift to the left by 1 step of row $i-1$.
\end{itemize}

Now, keeping the order the same, we arrange these columns on the circumference of a circle. 
As an illustration of the circular arrangement, consider $m=3, q=11, a=2$. The columns are

\[\begin{tabular}{cccccccccc}

$2^0$ & $2^1$ & $2^2$ & $2^3$ & $2^4$ & $2^5$ & $2^6$ & $2^7$ & $2^8$ & $2^9$ \\ 

$2^1$ & $2^2$ & $2^3$ & $2^4$ & $2^5$ & $2^6$ & $2^7$ & $2^8$ & $2^9$ & $2^0$ \\ 

$2^2$ & $2^3$ & $2^4$ & $2^5$ & $2^6$ & $2^7$ & $2^8$ & $2^9$ & $2^0$ & $2^1$ \\ 

$2^3$ & $2^4$ & $2^5$ & $2^6$ & $2^7$ & $2^8$ & $2^9$ & $2^0$ & $2^1$ & $2^2$ \\
 
\end{tabular}\]

We arrange these columns in a circle as follows. 
\begin{center}
\begin{tikzpicture}
\foreach \a in {0,1,...,9}{
\FPeval{\angl}{(-360)*\a/10}
\draw (90-\a*360/10: 2cm) node{\rotatebox{\angl}{

\FPeval{\moda}{round((\a+1)-10*\floor{(\a+1)/10}, 0)}
\FPeval{\modb}{round((\a+2)-10*\floor{(\a+2)/10}, 0)}
\FPeval{\modc}{round((\a+3)-10*\floor{(\a+3)/10}, 0)}

$\begin{tabular}{c}

$2^{\a}$ \\ 

$2^{\moda}$ \\ 

$2^{\modb}$ \\ 

$2^{\modc}$
 
\end{tabular}$

}};
}
\end{tikzpicture}
\end{center}

Note that choosing a tuple $(x_1,x_2,\dots, x_n) \in \Z_q^n$ corresponds to choosing $n$ of these columns (then $x_i$ is the topmost element of the $i$th column chosen). Now from the definition of the set $S$, if $(x_1,x_2,\dots,x_n)\in S$, we must choose these columns such that the topmost ($0$th) element of each column does not coincide with any element of any other chosen column. From the circular arrangement of the columns, we see that if we choose a column $C$, then we have to discard exactly $m$ columns to its right and $m$ columns to its left. Hence we simply need to find the number of ways to choose $n$ objects out of $q-1$ distinct circularly placed objects so that between any two chosen objects there are at least $m$ objects which are not chosen. Let us first fix the first element $x_1=2^0$, so we choose the column with topmost element $2^0$. Now label the columns $C_1,C_2,\dots, C_{q-1}$, starting from the column with topmost element $2^0$ and going clockwise. Suppose we choose the columns $C_{i_1},C_{i_2},\dots, C_{i_n}$, with $i_1<i_2<\dots<i_n$. Let $y_j$ be the number of columns between $C_{i_j}$ and $C_{i_{j+1}}$ for $j=0,\dots, n-1$. Then $(y_0,y_1,\dots, y_{n-1})$ determines the chosen columns. We find the number of such tuples in the standard way. We have 
\[ \sum_{j=0}^{n-1}y_j=q-1-n \] where $y_j\geq m$ for each $j$. So further define $z_j=y_j-m$ for $j\geq 0$. So $z_j\geq 0$ for each $j$ and \[ \sum_{j=0}^{n-1}z_j=q-1-n-nm. \] The number of tuples $(z_0,z_1,\dots, z_{n-1})$ determines the number of tuples $(y_0,y_1,\dots,y_{n-1})$ and this number is \[ \binom{q-1-n-nm+n-1}{n-1}=\binom{q-nm-2}{n-1}. \]
Now, we initially assumed that $x_1=2^0$. We can change this starting position in $q-1$ ways (without disturbing the cyclic order yet). Note that this gives us all the $n$ cyclic permutations of a choice $(x_1,x_2,\dots,x_n)$. Finally, there are $\frac{n!}{n}=(n-1)!$ non-cyclic permutations of each choice $(x_1,x_2,\dots,x_n)$, so the final count is \[ \binom{q-nm-2}{n-1}(q-1)(n-1)!=(q-2)(q-nm-2)(q-nm-3)\dots (q-nm-n). \] \par
Since this expression for $\chi_{\A_n^{(m)}}(q)$ holds for infinitely many primes $q$, and $\chi_{\A_n^{(m)}}$ is a polynomial, it follows that \[ \chi_{\A_n^{(m)}}(t) = (t-1)(t-mn-2)(t-mn-3)\dots (t-mn-n). \]

\end{proof}

\begin{corollary}\label{numregions}
The number of regions of the arrangement $\A_n^{(m)}$ is $\frac{2(nm+n+1)!}{(nm+2)!}$.
\end{corollary}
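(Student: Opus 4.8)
The plan is to combine Zaslavsky's theorem (\Cref{zaslavsky}) with the factorization of the characteristic polynomial established in \Cref{charpolyref}. By \Cref{zaslavsky}, the number of regions equals $(-1)^n \chi_{\A_n^{(m)}}(-1)$, so the entire task reduces to evaluating
\[ \chi_{\A_n^{(m)}}(-1) = (-1-1)(-1-mn-2)(-1-mn-3)\cdots(-1-mn-n) \]
and simplifying.

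First I would count the linear factors: there is the single factor coming from $(t-1)$, together with the $n-1$ factors coming from $(t-mn-j)$ for $j = 2, \ldots, n$, for a total of $n$ factors. Evaluating at $t=-1$, the first factor is $-2$, and the factor $(t-mn-j)$ becomes $-(mn+j+1)$; as $j$ ranges over $2, \ldots, n$ these are exactly $-(mn+3), -(mn+4), \ldots, -(mn+n+1)$. Pulling the sign out of all $n$ negative factors gives $\chi_{\A_n^{(m)}}(-1) = (-1)^n \cdot 2(mn+3)(mn+4)\cdots(mn+n+1)$, and this $(-1)^n$ cancels the one in Zaslavsky's formula. Hence
\[ r(\A_n^{(m)}) = 2(mn+3)(mn+4)\cdots(mn+n+1) = \frac{2(nm+n+1)!}{(nm+2)!}, \]
where the last equality merely recognizes the rising product as a ratio of factorials.

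The argument involves no real obstacle --- it is a one-line substitution followed by routine algebra. The only place that warrants a moment of care is the bookkeeping of signs: one must make sure that the number of linear factors of $\chi_{\A_n^{(m)}}$ is exactly $n$, so that the $(-1)^n$ produced by evaluating the product at $-1$ cancels precisely the $(-1)^n$ in $r(\A)=(-1)^n\chi_\A(-1)$ and leaves a positive integer, as it must. As a consistency check one can also verify that $\frac{2(nm+n+1)!}{(nm+2)!}=n!\cdot A_n(m,2)$, which matches the two parameter Fuss--Catalan interpretation advertised in the introduction.
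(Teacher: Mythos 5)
Your proposal is correct and is essentially identical to the paper's own proof: both substitute $t=-1$ into the factorization from \Cref{charpolyref}, extract the sign $(-1)^n$ from the $n$ negative factors so that it cancels the $(-1)^n$ in Zaslavsky's formula, and recognize $2(mn+3)(mn+4)\cdots(mn+n+1)$ as $\frac{2(nm+n+1)!}{(nm+2)!}$. The sign bookkeeping and factor count you carry out are exactly the computation the paper performs.
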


\begin{proof}
By \Cref{zaslavsky}, we have 
\begin{align*}
r(\A_n^{(m)})&=(-1)^n\chi_{\A_n^{(m)}}(-1)\\
&=(-1)^n(-2)(-mn-3)(-mn-4)\dots (-mn-n-1)\\
&=(-1)^n(-1)^n2\frac{(nm+n+1)!}{(nm+2)!}\\
&=\frac{2(nm+n+1)!}{(nm+2)!}.
\end{align*}
\end{proof}

\begin{remark}\label{numregionscatalan}
Let $r_0(\A_n^{(m)})$ denote the number of regions corresponding to coordinates $(x_1,x_2,\dots, x_n)$ with $x_1<x_2<\dots<x_n$. Then $r(\A_n^{(m)})=n!\cdot r_0(\A_n^{(m)})$ and hence 
\begin{align*}
r_0(\A_n^{(m)})&=\frac{2(nm+n+1)!}{n!(nm+2)!}\\
&=\frac{2}{nm+n+2}\binom{nm+n+2}{n}\\
&=\frac{2}{n(m+1)+2}\binom{n(m+1)+2}{n}\\
&=A_n(m,2)
\end{align*}
which are exactly the two parameter Fuss-Catalan numbers mentioned in \Cref{sec1} when $r=2$. Hence \[ r(\A_n^{(m)})=n!\cdot A_n(m,2). \]
\end{remark}

The characteristic polynomials and number of regions of $\A_n^{(m)}$ for some prototypical values of $m,n$ have been listed in \Cref{charpoly}.

\begin{table}[h!]
\captionsetup{font=tiny,labelfont=bf}
\begin{tabular}{|c|c|c|c|}
\hline
$\bm{n}$ & $\bm{m}$ & $\bm{\chi_{\A_n^{(m)}}(t)}$ & $\bm{r(\A_n^{(m)})}$\\
\hline
2 & 1 & $t^2-5t+4$ & 10\\
\hline
2 & 2 & $t^2-7t+6$ &  14\\
\hline
3 & 1 & $t^3-12t^2+41t-30$ & 84\\
\hline
3 & 2 & $t^3-18t^2+89t-72$ & 180\\
\hline
3 & 3 & $t^3-24t^2+155t-132$ & 312\\
\hline
4 & 1 & $t^4-22t^3+167t^2-482t+336$ & 1008\\
\hline
4 & 2 & $t^4-34t^3+395t^2-1682t+1320$ & 3432\\
\hline
4 & 3 & $t^4-46t^3+719t^2-4034t+3360$ & 8160\\
\hline
4 & 4 & $t^4-58t^3+1139t^2-7922t+6840$  & 15960\\
\hline
\end{tabular}
\caption{Characteristic polynomials and number of regions of $\A_n^{(m)}$ for $1\leq m\leq n\leq 4$ ($n>1$)}
\label{charpoly}

\end{table}

The count suggests that the arrangements $\A_n^{(m)}$ are related to the generalised Catalan arrangements $\C_n^{(m)}=\{x_i-x_j=k\mid k\in [-m,m], 1\leq i<j\leq n \}$ which have been studied in detail in \cite[Section 8]{ber}. The number of regions of $\C_n^{(m)}$ has been shown to be equal to $\frac{((m+1)n)!}{(mn+1)!}=n!\cdot \frac{1}{(m+1)n+1}\binom{(m+1)n+1}{n}=n!\cdot A_n(m,1)$. Let $\chi_{\C_n^{(m)}}$ be the characteristic polynomial for the arrangement $\C_n^{(m)}$. We will exhibit a relationship between $\chi_{\C_n^{(m)}}$ and $\chi_{\A_n^{(m)}}$. To do this, we first prove an even more general result. \par
Let $S$ be a finite set of integers. We define the arrangement in $\R^n$ given by
\[
    \A_n^S := \{x_i=0 \mid i \in [n]\} \cup \{x_i=a^kx_j \mid k \in S, 1\leq i<j\leq n\}.
\]
Note that we can show, analogous to the proof of \Cref{nodepa}, that the combinatorics of the arrangement $\A_n^S$ is independent of the value of $a$ as long as $a>1$ is a positive integer.
Similarly, we have the deformation of the braid arrangement in $\R^n$ given by
\[
    \C_n^S := \{x_i-x_j=k \mid k \in S, 1\leq i<j\leq n\}.
\]

\begin{theorem}\label{charpolygeneralised}
For any finite set of integers $S$, we have
\[
    \chi_{\A_n^S}(t) = \chi_{\C_n^S}(t-1).
\]
\end{theorem}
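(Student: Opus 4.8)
The plan is to run the finite field method (\Cref{ffm}) for both $\A_n^S$ and $\C_n^S$ and compare the resulting point counts, using — as in the proof of \Cref{charpolyref} — that by \Cref{primrootexistence} we may fix $a$ to be a positive integer which is a primitive root modulo infinitely many primes. Let $q$ be such a prime, taken large enough (how large will be specified below). Since $\A_n^S$ is defined over the integers, \Cref{ffm} gives $\chi_{\A_n^S}(q) = \#T_q$, where
\[ T_q = \{(x_1,\dots,x_n)\in\Z_q^n \mid x_i\neq 0 \text{ for all } i,\ x_i \neq a^k x_j \text{ for all } 1\le i<j\le n,\ k\in S\}. \]
The conditions $x_i\neq 0$ put each $x_i$ in $\Z_q^\times$, and since $a$ is a primitive root modulo $q$ the map $e\mapsto a^e$ is a bijection $\Z/(q-1)\Z \to \Z_q^\times$ under which $x_i = a^k x_j$ becomes $e_i - e_j \equiv k \pmod{q-1}$. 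Hence $\#T_q = P(q-1)$, where for a positive integer $N$ I set
\[ P(N) := \#\{(e_1,\dots,e_n)\in(\Z/N\Z)^n \mid e_i - e_j \not\equiv k \pmod N \text{ for all } 1\le i<j\le n,\ k\in S\}. \]

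First I would show that $P(N) = \chi_{\C_n^S}(N)$ for all sufficiently large $N$. By inclusion--exclusion over the ``bad'' congruences, $P(N) = \sum_F (-1)^{|F|} N_F$, where $F$ runs over all sets of triples $(i,j,k)$ with $1\le i<j\le n$ and $k\in S$, and $N_F$ is the number of $e\in(\Z/N\Z)^n$ with $e_i - e_j \equiv k \pmod N$ for every $(i,j,k)\in F$. Encoding $F$ as an integer-edge-labelled graph on vertex set $[n]$, the system defining $N_F$ is consistent exactly when parallel edges carry equal labels and every cycle has signed label-sum $\equiv 0 \pmod N$, and in that case $N_F = N^{c(F)}$, where $c(F)$ is the number of connected components. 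All label-sums and label-differences occurring here are bounded in absolute value by a constant depending only on $n$ and $\max_{k\in S}|k|$, so there is an $N_0$ such that for $N\ge N_0$ these conditions hold modulo $N$ if and only if they hold as equalities over $\Z$. Thus for $N\ge N_0$ we get $P(N) = Q(N)$, where $Q(N) := \sum_{F \text{ consistent over } \Z} (-1)^{|F|} N^{c(F)}$ is a fixed polynomial. On the other hand, applying \Cref{ffm} to the integral arrangement $\C_n^S$ gives $P(p) = \#(\Z_p^n\setminus V_{\C_n^S}) = \chi_{\C_n^S}(p)$ for every large prime $p$. Since $Q$ and $\chi_{\C_n^S}$ then agree at infinitely many points, $Q = \chi_{\C_n^S}$, so $P(N) = \chi_{\C_n^S}(N)$ for all $N\ge N_0$.

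Putting the pieces together, for every sufficiently large prime $q$ for which $a$ is a primitive root we obtain $\chi_{\A_n^S}(q) = P(q-1) = \chi_{\C_n^S}(q-1)$. By \Cref{primrootexistence} there are infinitely many such primes, so the polynomials $\chi_{\A_n^S}(t)$ and $\chi_{\C_n^S}(t-1)$ agree at infinitely many values of $t$ and are therefore identical. I expect the main obstacle to be the middle step — verifying that $P(N)$ stabilises to a single polynomial for $N$ large and that this polynomial is precisely $\chi_{\C_n^S}$; the content there is exactly that, for $N$ large, ``the congruence system mod $N$ is consistent'' coincides with ``the corresponding linear system is consistent over $\R$'' and that the numbers of components match, so that the inclusion--exclusion mod $N$ reproduces the finite field count for $\C_n^S$. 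The remaining ingredients — the finite field method, the bijection $e\mapsto a^e$, and the polynomial-interpolation conclusion — are routine.
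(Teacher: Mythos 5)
Your argument is correct and follows essentially the same route as the paper: fix $a$ to be a primitive root modulo infinitely many primes via \Cref{primrootexistence}, use the bijection $e \mapsto a^e$ between $\Z/(q-1)\Z$ and $\Z_q^\times$ to identify the point count for $\A_n^S$ over $\Z_q$ with the point count for $\C_n^S$ over $\Z_{q-1}$, and conclude by comparing the two polynomials at infinitely many values. The only divergence is that where the paper simply invokes Athanasiadis's extended finite field method (\Cref{modffm}) to evaluate $\chi_{\C_n^S}$ at the composite modulus $q-1$, you re-derive that fact by a Whitney-style inclusion--exclusion over consistent subsystems; that derivation is sound, but it amounts to a self-contained proof of the result the paper cites.
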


To show this, we will take help of a slightly modified version of \Cref{ffm}, for deformations of the braid arrangement (note that the arrangements $\C_n^{S}$ are such deformations), also given by Athanasiadis in \cite{athanasiadis1999extended}. We state this in our context.

\begin{theorem}[{\cite[Theorem 2.1]{athanasiadis1999extended}}]\label{modffm}
If $\A$ is a deformation of the braid arrangement in $\R^n$, there exists an integer $k$ such that for all integers $q$ greater than $k$, 
\[ \chi(\A,q)= \#\left( \Z_q^n\setminus V_\A  \right)\]
where $V_{\A}$ is the union of the hyperplanes in $\Z_q^n$ obtained by reducing $\A$ mod $q$.
\end{theorem}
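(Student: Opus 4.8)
The plan is to prove this modified finite field method by the same Möbius / inclusion–exclusion machinery that underlies the standard version (\Cref{ffm}), while exploiting the special form of the hyperplanes in a deformation of the braid arrangement to replace the hypothesis ``$q$ a large prime'' by ``$q$ a large integer.'' The only place primality is genuinely used in the classical argument is to guarantee that $\Z_q$ is a field, so that the reduced intersection poset and the ranks of the subsystems match their rational counterparts; the point is that for hyperplanes of the form $x_i - x_j = c$ this matching survives over $\Z_q$ for \emph{all} large $q$, prime or not.

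First I would record the combinatorial description of intersections. Every hyperplane of such an arrangement reads $x_i - x_j = c$ with $c \in \Z$, so a subset $\B$ of the hyperplanes is encoded by a graph on $[n]$ whose edges carry the integer labels $c$. Over any ring $\Z_q$ the common solution set of $\B$ is either empty or a coset of the solution space of the homogeneous system $\{x_i = x_j\}$, hence, when nonempty, has exactly $q^{c(\B)}$ points, where $c(\B)$ is the number of connected components of the graph. The coefficient vectors are the differences $e_i - e_j$ of standard basis vectors, so the coefficient matrix is the (oriented) incidence matrix of the graph and is totally unimodular; this is precisely what makes the count $q^{c(\B)}$ valid for every modulus and forces the $\Z_q$-rank to agree with the $\Q$-rank. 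With this in hand, Whitney's inclusion–exclusion over the flats of the reduced arrangement gives, for any $q$,
\[
  \#\!\left(\Z_q^n \setminus V_\A\right) = \sum_{x \in \ipa_q(\A)} \mu_q(\hat 0, x)\, q^{\dim_q x},
\]
where $\ipa_q(\A)$ is the intersection poset of $\A$ reduced mod $q$, $\mu_q$ its Möbius function, and $\dim_q x = c(\B_x)$ the $\Z_q$-dimension of the flat $x$. This identity is purely formal and needs no assumption on $q$.

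The heart of the argument is then a stability statement: there is an integer $k$ such that for all $q > k$ the reduced poset $\ipa_q(\A)$ is isomorphic, as a ranked poset, to the intersection poset $\ipa(\A)$ over $\R$, with $\dim_q x = \dim x$ under the isomorphism. A label-graph has nonempty intersection over $\Z_q$ exactly when every cycle has signed label-sum $\equiv 0 \pmod q$, whereas over $\R$ the condition is that this signed sum be $0$. Since there are only finitely many subsets of hyperplanes and each cycle has a bounded integer label-sum, choosing $k$ larger than all these absolute values (and larger than all differences of distinct constants, so that no two hyperplanes collide) forces ``$\equiv 0 \pmod q$'' to coincide with ``$= 0$.'' Thus for $q > k$ the same subsets are consistent, the same collapses of hyperplanes occur, the component counts $c(\B)$ agree, and the two posets coincide. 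As the Möbius function is a poset invariant, $\mu_q = \mu$ under this identification, and the displayed sum collapses to $\sum_{x \in \ipa(\A)} \mu(\hat 0, x)\, q^{\dim x} = \chi_\A(q)$.

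The main obstacle I anticipate is making this stability step fully rigorous for composite $q$: one must rule out not only a mismatch between consistency mod $q$ and rational consistency, but also spurious coincidences among hyperplanes and unexpected rank drops that could occur when $\Z_q$ is not a field. All three are controlled by total unimodularity of the incidence matrix, which guarantees that every relevant minor is $0$ or $\pm 1$, so that solvability and rank over $\Z_q$ are dictated by the corresponding integer data as soon as $q$ exceeds the bounded constants appearing in the labels and cycle-sums. Exhibiting a single explicit $k$ that simultaneously governs label collisions, rank stability, and cycle consistency is the one point demanding care; the remainder of the argument is formal Möbius inversion.
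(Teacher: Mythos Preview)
The paper does not prove this statement; it is quoted verbatim as \cite[Theorem 2.1]{athanasiadis1999extended} and then invoked in the proof of \Cref{charpolygeneralised}. So there is no in-paper argument to compare your proposal against.

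That said, your sketch is essentially the standard proof of the cited result and is correct in outline. The key mechanism you identify---total unimodularity of the coefficient matrix built from vectors $e_i-e_j$, so that over any $\Z_q$ the solution set of a consistent subsystem has exactly $q^{c(\B)}$ points---is exactly what allows one to drop the primality hypothesis for deformations of the braid arrangement. Your treatment of the stability step (choosing $k$ larger than all cycle label-sums and all differences of constants so that mod-$q$ consistency coincides with integer consistency and no two hyperplanes collapse) is the right idea and handles the composite-$q$ case correctly. One small refinement: rather than working directly with the intersection poset $\ipa_q(\A)$ and its M\"obius function, it is cleaner to use Whitney's formula in the form
\[
\chi_\A(t)=\sum_{\substack{\B\subseteq\A\\ \cap\B\neq\emptyset}}(-1)^{|\B|}\,t^{\dim(\cap\B)},
\]
which over $\Z_q$ becomes the inclusion--exclusion count of $\Z_q^n\setminus V_\A$; then you only need to match, for each subset $\B$, the emptiness/nonemptiness and the dimension, and you avoid having to argue separately that the full poset isomorphism preserves the M\"obius function. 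This sidesteps the delicate point of showing $\ipa_q(\A)\cong\ipa(\A)$ as ranked posets and reduces everything to the two facts you already isolated.
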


\begin{proof}[Proof of \Cref{charpolygeneralised}]
By \Cref{primrootexistence}, we choose $a$ which is a primitive root modulo infinitely many primes. By \Cref{modffm}, we choose $q=p-1$ for a large enough prime $p$ for which $a$ is a primitive root. Now we have $\chi_{\C_n^{S}}(q)=|T|$ where 
\[ T= \{(x_1,x_2,\dots,x_n)\in\Z_q^n\mid x_i-x_j\neq k, k \in S, 1\leq i<j\leq n\}. \] Note that the condition $x_i-x_j\neq k \pmod{q}$ is equivalent to $a^{x_i-x_j}\neq a^k \pmod{p}$, that is, $a^{x_1}\neq a^k\cdot a^{x_j}\pmod{p}$. Now the distinct powers of $a$ with exponents modulo $q$ give distinct (non zero) residues modulo $p$. Hence if we consider the set $T'$ defined as 
\[ T'= \{(y_1,y_2,\dots,y_n)\in\Z_p^n\mid y_i\neq 0, y_i\neq a^ky_j, k \in S, 1\leq i<j\leq n\}, \] then $|T|=|T'|$. Now using \Cref{ffm} on the arrangement $\A_n^S$, we get that $|T'|=\chi_{\A_n^{S}}(p)$, hence $\chi_{\C_n^{S}}(q)=|T|=|T'|=\chi_{A_n^{S}}(p)=\chi_{A_n^{S}}(q+1)$. Since this holds for infinitely many $q=p-1$, and since $\chi_{\C_n^{S}}, \chi_{\A_n^{S}}$ are both polynomials, it follows that for all $t$, 
\[ \chi_{\A_n^{S}}(t)=\chi_{\C_n^{S}}(t-1). \] This completes the proof.
\end{proof}

\begin{remark}
Though we will only make use of \Cref{charpolygeneralised} in the stated form, we note that it can be generalized.
Let $n \geq 1$ and $\mathbf{S}=(S_{i,j})_{1 \leq i < j \leq n}$ be a sequence of finite sets of integers.
To such a tuple $\mathbf{S}$, we associate the arrangement in $\R^n$ given by
\begin{equation*}
    \A_{\mathbf{S}}:=\{x_i = 0\mid 1\leq i\leq n \} \cup \{x_i=a^kx_j \mid 1 \leq i < j \leq n,\ k \in S_{i,j}\}
\end{equation*}
and the deformation of the braid arrangement in $\R^n$ given by
\begin{equation*}
    \C_{\mathbf{S}}:=\{x_i-x_j=k \mid 1 \leq i < j \leq n,\ k \in S_{i,j}\}.
\end{equation*}
Then, just as above, we have
\begin{equation*}
    \chi_{\A_{\mathbf{S}}}(t) = \chi_{\C_{\mathbf{S}}}(t-1).
\end{equation*}
\end{remark}

The following corollary of \Cref{charpolygeneralised} gives us the relationship between the characteristic polynomials of $\A_n^{(m)}$ and $\C_n^{(m)}$.

\begin{corollary}\label{charpolyrel}
We have $\chi_{\A_n^{(m)}}(t)=\chi_{\C_n^{(m)}}(t-1)$.
\end{corollary}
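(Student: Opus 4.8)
The plan is to recognize this as nothing more than a specialization of \Cref{charpolygeneralised}. First I would observe that if we take $S = [-m,m] = \{-m, -m+1, \dots, m\}$, which is a finite set of integers, then by the defining equations \eqref{arrdef} and the displayed definition of $\C_n^{(m)}$ we have $\A_n^S = \A_n^{(m)}$ and $\C_n^S = \C_n^{(m)}$ on the nose. Hence the identity $\chi_{\A_n^S}(t) = \chi_{\C_n^S}(t-1)$ furnished by \Cref{charpolygeneralised} reads exactly $\chi_{\A_n^{(m)}}(t) = \chi_{\C_n^{(m)}}(t-1)$, which is the claim.

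There is essentially no obstacle to overcome here: the only thing to check is the bookkeeping that $[-m,m]$ is indeed finite (it has $2m+1$ elements) and that both arrangement families literally agree with the $S$-indexed families for this choice of $S$, so that \Cref{charpolygeneralised} applies verbatim. One could alternatively give a direct proof mirroring the argument for \Cref{charpolygeneralised} — choosing $a$ a primitive root modulo infinitely many primes via \Cref{primrootexistence}, setting $q = p-1$, using \Cref{modffm} to count $\C_n^{(m)}$ mod $q$, translating $x_i - x_j \ne k \pmod q$ into $a^{x_i} \ne a^k a^{x_j} \pmod p$, and invoking \Cref{ffm} for $\A_n^{(m)}$ mod $p$ — but this merely repeats the proof of the general theorem, so the clean route is simply to cite \Cref{charpolygeneralised}. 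As a sanity check one can also compare \Cref{charpolyref}, which gives $\chi_{\A_n^{(m)}}(t) = (t-1)(t-mn-2)\cdots(t-mn-n)$, against the known factorization $\chi_{\C_n^{(m)}}(t) = t(t-mn-1)(t-mn-2)\cdots(t-mn-n+1)$ (see \cite[Section 8]{ber}); substituting $t \mapsto t-1$ in the latter reproduces the former, confirming the corollary.
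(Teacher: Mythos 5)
Your proposal is correct and matches the paper exactly: the corollary is obtained by specializing \Cref{charpolygeneralised} to $S=[-m,m]$, for which $\A_n^S=\A_n^{(m)}$ and $\C_n^S=\C_n^{(m)}$ by definition. The consistency check against the explicit factorizations of the two characteristic polynomials is a nice bonus but not needed.
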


It was already shown by Athanasiadis in \cite[Section 5]{athanas96} that 
\[ \chi_{\C_n^{(m)}}(t)=t(t-mn-1)(t-mn-2)\dots (t-mn-n+1). \]
Hence \Cref{charpolyrel} gives us a way to derive either of $\chi_{\A_n^{(m)}}$ and $\chi_{\C_n^{(m)}}$ from each other.

The combinatorial relationship between $\C_n^{(m)}$ and $\A_n^{(m)}$ will be further detailed in \Cref{sec3}.

%\newpage

\section{Bijections}\label{sec3}

In this section, we exhibit a bijection between the regions of $\A_n^{(m)}$ and certain decorated generalized Dyck paths as well as decorated non-nesting partitions.
From \Cref{nodepa}, we know that the value of $a>1$ in \eqref{arrdef} does not affect the combinatorics of $\A_n^{(m)}$.
For convenience, we now fix $a=2$.

From the definition of the arrangement $\A_n^{(m)}$, we can see that its regions are given by valid total orders on the symbols
\begin{equation*}
    \{0\} \cup \{2^kx_i \mid i \in [n], k \in [0,m]\}.
\end{equation*}
By a \textit{valid} total order, we mean that there is at least one point in $\R^n$ that satisfies the order.

\begin{example}\label{sec3:ex1}
One region of $\A_5^{(2)}$ is given by
\begin{equation*}
    4x_3<2x_3<4x_1<x_3<2x_1<x_1<0<x_5<2x_5<4x_5<x_4<x_2<2x_4<2x_2<4x_4<4x_2.
\end{equation*}
Note that this total order is valid since the point $(-0.5,4,-1.5,3,0.5) \in \R^5$ satisfies the required inequalities.
\end{example}

Akin to \cite[Section 8.1]{ber}, we use the symbols
\begin{equation*}
    T_n^{(m)}:=\{\textcolor{red}{0}\} \cup \{\lt{i}{k} \mid i \in [n], k \in [0,m]\}
\end{equation*}
to represent the total order.
Here, $2^kx_i$ is represented by $\lt{i}{k}$ for $i \in [n]$ and $k \in [0,m]$ and the $0$ in the total order is colored red.

\begin{example}\label{sketchex}
The region of $\A_5^{(2)}$ in \Cref{sec3:ex1} is given by
\begin{equation*}
    \lt{3}{2}\lt{3}{1}\lt{1}{2}\lt{3}{0}\lt{1}{1}\lt{1}{0}\ \textcolor{red}{0}\ \lt{5}{0}\lt{5}{1}\lt{5}{2}\lt{4}{0}\lt{2}{0}\lt{4}{1}\lt{2}{1}\lt{4}{2}\lt{2}{2}.
\end{equation*}
\end{example}

To characterize the valid total orders on $T_n^{(m)}$, we first consider those where $x_i>0$ for all $i \in [n]$.
In this case, considering $X_i:=\operatorname{log}_2(x_i)$, gives a bijection with valid total orders on the symbols
\begin{equation*}
    \{X_i + k \mid i \in [n], k \in [0,m]\}.
\end{equation*}
Such orders have already been characterized; they correspond to the regions of the generalized Catalan arrangement in $\R^n$ given by
\begin{equation*}
    \{X_i-X_j=k \mid k \in [-m,m], 1 \leq i < j \leq n\}.
\end{equation*}
Hence, using the results of \cite[Section 8.1]{ber}, we have the following.

\begin{lemma}\label{allpos}
The regions in $\A_n^{(m)}$ where $x_i>0$ for all $i \in [n]$ correspond to words in $T_n^{(m)}$ where
\begin{enumerate}
    \item each letter of $T_n^{(m)}$ appears exactly once,
    \item the first letter is $\textcolor{red}{0}$,
    \item for any $i,j \in [n]$ and $k,l \in [0,m-1]$, if $\lt{i}{k}$ appears before $\lt{j}{l}$, then $\lt{i}{k+1}$ appears before $\lt{j}{l+1}$, and
    \item for any $i \in [n]$ and $k \in [0,m-1]$, $\lt{i}{k}$ appears before $\lt{i}{k+1}$.
\end{enumerate}
\end{lemma}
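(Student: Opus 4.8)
The plan is to transport the question, via the logarithm, into the already-understood world of the extended Catalan arrangement $\C_n^{(m)}$ of type $A$, and then simply quote Bernardi's description of its regions. First I would observe that the red symbol is forced to the front: if $(x_1,\dots,x_n)\in\R^n$ lies in a region of $\A_n^{(m)}$ with $x_i>0$ for all $i$, then $0<2^kx_i$ for every $i\in[n]$ and $k\in[0,m]$, so in the word on $T_n^{(m)}$ recording this region the letter $\textcolor{red}{0}$ occurs first; this is item~(2). Deleting $\textcolor{red}{0}$ leaves a total order on the symbols $\{2^kx_i\mid i\in[n],\,k\in[0,m]\}$, and conversely prepending $\textcolor{red}{0}$ to any such order recovers a region with all coordinates positive. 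So I am reduced to characterizing exactly the total orders on $\{2^kx_i\}$ that are realized by some point of $\R_{>0}^n$.

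Next I would pass to logarithms. Since $t\mapsto\log_2 t$ is an order isomorphism $\R_{>0}\to\R$, setting $X_i:=\log_2 x_i$ turns $2^kx_i$ into $X_i+k$, and $(X_1,\dots,X_n)$ ranges over all of $\R^n$ as $(x_1,\dots,x_n)$ ranges over $\R_{>0}^n$; under this substitution the letter $\lt{i}{k}$ is left unchanged. Hence the orders I want are precisely the total orders on $\{X_i+k\mid i\in[n],\,k\in[0,m]\}$ realized by points of $\R^n$. For a generic point the relative order of $X_i+k$ and $X_j+l$ with $i\neq j$ is governed solely by whether $X_i-X_j$ is smaller or larger than $l-k\in[-m,m]$, so two generic points induce the same total order on $\{X_i+k\}$ if and only if, for every pair $i<j$, the difference $X_i-X_j$ lies in the same one of the open intervals that the integers $-m,\dots,m$ cut out on the line --- i.e.\ if and only if the two points lie in the same region of $\C_n^{(m)}=\{X_i-X_j=k\mid k\in[-m,m],\,1\le i<j\le n\}$. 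Thus these total orders are in natural bijection with the regions of $\C_n^{(m)}$.

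Finally I would invoke \cite[Section~8.1]{ber}, where the regions of $\C_n^{(m)}$ are put in bijection with the words on the alphabet $\{\lt{i}{k}\mid i\in[n],\,k\in[0,m]\}$ in which every letter occurs exactly once (item~(1)) and which satisfy the shift-compatibility conditions (item~(3) together with item~(4) --- the latter, that $\lt{i}{k}$ precedes $\lt{i}{k+1}$, being forced anyway by $X_i+k<X_i+k+1$). Composing this bijection with the reduction above, i.e.\ prepending the forced letter $\textcolor{red}{0}$, yields exactly the asserted correspondence.

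The only real work is the last step: one must check that the word conditions recorded in \cite[Section~8.1]{ber} are literally items~(1), (3), (4) rather than a cosmetic variant (the roles of $i$ and $j$ interchanged, the order reversed, the shift running downward, or the alphabet indexed by $[1,m]$ instead of $[0,m]$), and that Bernardi's statement covers all regions of $\C_n^{(m)}$ and not only the dominant ones. I expect this bookkeeping to be the main obstacle; the genericity used in the first two steps --- that inside an open region all the $2^kx_i$ are distinct and nonzero, which holds because $|l-k|\le m$ for $k,l\in[0,m]$ --- is routine.
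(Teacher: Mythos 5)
Your proposal is correct and follows essentially the same route as the paper: the paper likewise observes that $\textcolor{red}{0}$ must come first when all $x_i>0$, applies $X_i=\log_2 x_i$ to convert the symbols $2^kx_i$ into $X_i+k$, identifies the resulting valid total orders with the regions of the extended Catalan arrangement, and then cites \cite[Section~8.1]{ber} for the word characterization. The only difference is that you spell out the genericity/forced-first-letter details and flag the convention-matching with Bernardi explicitly, whereas the paper leaves these implicit.
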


Characterizing the other valid total orders on $T_n^{(m)}$ is similar.
Suppose that $B_1,B_2 \subseteq [n]$ are disjoint subsets whose union is $[n]$.
We now characterize the regions of $\A_n^{(m)}$ where $x_i<0$ for all $i \in B_1$ and $x_i>0$ for all $i \in B_2$.

Since $x_i>0$ if and only if $2^kx_i>0$ for all $k \in [0,m]$, we just have to specify separate valid total orders for those letters in $T_n^{(m)}$ which have subscripts in $B_1$ and those that have subscripts in $B_2$.
Setting $Y_i=\operatorname{log}_2(-x_i)$ for $i \in B_1$ and $X_i=\operatorname{log}_2(x_i)$ for $i \in B_2$ gives a bijection between such total orders and a pair consisting of a valid total order on $\{Y_i + k \mid i \in B_1, k \in [0,m]\}$ and one on $\{X_i + k \mid i \in B_2, k \in [0,m]\}$.

Using the above observations and the results of \cite[Section 8.1]{ber}, we have the following characterization of the regions of $\A_n^{(m)}$.

\begin{theorem}\label{regiondesc}
The regions in $\A_n^{(m)}$ where $x_i<0$ for $i \in B_1$ and $x_i>0$ for $i \in B_2$ correspond to words in $T_n^{(m)}$ of the form $w_1\ \textcolor{red}{0}\ w_2$ where
\begin{enumerate}
    \item for $r=1,2$, each letter in $T_n^{(m)}$ of the form $\lt{i}{k}$ where $i \in B_r$ appears exactly once in $w_r$,
    \item the reverse of $w_1$ satisfies properties (3) and (4) of \Cref{allpos}, and
    \item $w_2$ satisfies properties (3) and (4) of \Cref{allpos}.
\end{enumerate}
\end{theorem}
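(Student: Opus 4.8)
The plan is to reduce the statement to the already-established description of the regions of the generalized Catalan arrangement, via a sign analysis followed by a pair of logarithmic substitutions, exactly as foreshadowed in the discussion preceding the statement. First I would fix the dictionary: a region of $\A_n^{(m)}$ is the same data as a valid (realizable) total order on $\{0\}\cup\{2^kx_i\mid i\in[n],\ k\in[0,m]\}$, encoded as a word in $T_n^{(m)}$ with $\textcolor{red}{0}$ standing for the symbol $0$. Fixing the sign pattern $x_i<0$ for $i\in B_1$ and $x_i>0$ for $i\in B_2$ forces $2^kx_i<0$ for all $i\in B_1$ and $2^kx_i>0$ for all $i\in B_2$; since every negative real is less than $0$ and $0$ is less than every positive real, the word must have the form $w_1\ \textcolor{red}{0}\ w_2$, where $w_1$ consists exactly of the letters $\lt{i}{k}$ with $i\in B_1$ and $w_2$ of the letters $\lt{i}{k}$ with $i\in B_2$. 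This yields property~(1) and shows that all comparisons between a $B_1$-letter and a $B_2$-letter are forced, so a region in this sign class is equivalent to the pair $(w_1,w_2)$ with the two factors constrained independently of one another.

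I would then treat the two factors separately. For $w_2$, the substitution $X_i=\log_2(x_i)$ ($i\in B_2$) gives an order-preserving bijection between valid total orders on $\{2^kx_i\mid i\in B_2,\ k\in[0,m]\}$ and valid total orders on $\{X_i+k\mid i\in B_2,\ k\in[0,m]\}$, i.e. the regions of the generalized Catalan arrangement on the coordinates indexed by $B_2$; applying \Cref{allpos} with $B_2$ in place of $[n]$ (so that $\textcolor{red}{0}\,w_2$ satisfies all of (1)--(4) there) shows $w_2$ satisfies (3) and (4). For $w_1$, put $Y_i=\log_2(-x_i)$ ($i\in B_1$), so that $2^kx_i=-2^{Y_i+k}$ and hence, for $i,j\in B_1$, one has $2^kx_i<2^\ell x_j$ if and only if $Y_i+k>Y_j+\ell$. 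Thus the left-to-right (increasing) order recorded by $w_1$ is the \emph{decreasing} order of the quantities $Y_i+k$, so the reverse of $w_1$ records their increasing order; this is precisely the word (after the leading $\textcolor{red}{0}$) of a region of the generalized Catalan arrangement on the coordinates indexed by $B_1$, so \Cref{allpos} applied once more shows the reverse of $w_1$ satisfies (3) and (4), which is property~(2).

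Finally, I would verify that the correspondence is a bijection by running the substitutions backwards: given a word $w_1\ \textcolor{red}{0}\ w_2$ satisfying conditions (1)--(3) of the statement, \Cref{allpos} produces positive reals $(x_i)_{i\in B_2}$ realizing $\textcolor{red}{0}\,w_2$ and positive reals $(2^{Y_i})_{i\in B_1}$ realizing $\textcolor{red}{0}$ followed by the reverse of $w_1$; then setting $x_i:=-2^{Y_i}$ for $i\in B_1$, together with the chosen $(x_i)_{i\in B_2}$, gives a point of $\R^n$ in the prescribed sign class whose associated word is exactly $w_1\ \textcolor{red}{0}\ w_2$, and distinct regions give distinct words since the word is a complete invariant of the total order. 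I expect the only delicate point to be the $w_1$ half: carefully tracking the order reversal caused by the negative sign and confirming that what \Cref{allpos} delivers is indeed ``the reverse of $w_1$ satisfies (3) and (4)'' rather than some mirrored variant of those conditions. Everything else is bookkeeping built on top of Bernardi's description of the regions of $\C_n^{(m)}$.
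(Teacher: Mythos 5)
Your proposal is correct and follows essentially the same route as the paper: split the word as $w_1\,\textcolor{red}{0}\,w_2$ by the sign pattern, substitute $X_i=\log_2(x_i)$ on $B_2$ and $Y_i=\log_2(-x_i)$ on $B_1$, and invoke the Catalan-arrangement characterization (\cref{allpos}) on each half. You are in fact slightly more explicit than the paper about the order reversal on the negative half and about running the substitutions backwards to get surjectivity, which is exactly the point the paper leaves implicit.
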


The words in $T_n^{(m)}$ that correspond to regions of $\A_n^{(m)}$, i.e., words of the form mentioned in \Cref{regiondesc}, will henceforth be called $m$-sketches of size $n$.

As mentioned in \cite[Section 8.1]{ber}, the words in $T_n^{(m)}$ of the form mentioned in \Cref{allpos} are in bijection with certain generalized Dyck paths.
By specifying a certain point in such Dyck paths, we can extend this bijection to all regions of $\A_n^{(m)}$.
We now describe this bijection.

\begin{definition}\label{dyckdef}
A labeled $m$-Dyck path of size $n$ is a sequence of $(m+1)n$ terms where
\begin{enumerate}
    \item $n$ terms are `$+m$',
    \item $mn$ terms are `$-1$',
    \item the sum of any prefix of the sequence is non-negative, and
    \item each $+m$ term is given a distinct label from $[n]$.
\end{enumerate}
\end{definition}

A labeled $m$-Dyck path of size $n$ can be drawn in $\mathbb{R}^2$ in the natural way.
Start the path at $(0,0)$, read the labeled $m$-Dyck path and for each term move by $(1,m)$ if it is $+m$, which we call an up-step, and by $(1,-1)$ if it is $-1$, which we call a down-step.
Also, label each up-step with its corresponding label in $[n]$.

\begin{example}
A labeled $2$-Dyck path of size $5$ is given in \Cref{labdyckex}.
\end{example}

\begin{figure}[H]
    \centering
    \begin{tikzpicture}[scale=0.75]
    \draw [thin,lightgray] (0,0)--(15,0);
    \node (0) [circle,inner sep=2pt,fill=black] at (0,0) {};
    \node (1) [circle,inner sep=2pt,fill=black] at (1,2) {};
    \node (2) [circle,inner sep=2pt,fill=black] at (2,1) {};
    \node (3) [circle,inner sep=2pt,fill=black] at (3,3) {};
    \node (4) [circle,inner sep=2pt,fill=black] at (4,2) {};
    \node (5) [circle,inner sep=2pt,fill=black] at (5,1) {};
    \node (6) [circle,inner sep=2pt,fill=black] at (6,0) {};
    \node (7) [circle,inner sep=2pt,fill=black] at (7,2) {};
    \node (8) [circle,inner sep=2pt,fill=black] at (8,1) {};
    \node (9) [circle,inner sep=2pt,fill=black] at (9,0) {};
    \node (10) [circle,inner sep=2pt,fill=black] at (10,2) {};
    \node (11) [circle,inner sep=2pt,fill=black] at (11,4) {};
    \node (12) [circle,inner sep=2pt,fill=black] at (12,3) {};
    \node (13) [circle,inner sep=2pt,fill=black] at (13,2) {};
    \node (14) [circle,inner sep=2pt,fill=black] at (14,1) {};
    \node (15) [circle,inner sep=2pt,fill=black] at (15,0) {};
    \draw (0)-- node[left] {3} (1)--(2)--node[left] {1} (3)--(4)--(5)--(6)--node[left] {5} (7)--(8)--(9)--node[left] {4} (10)--node[left] {2} (11)--(12)--(13)--(14)--(15);
    \end{tikzpicture}
    \caption{A labeled $2$-Dyck path of size $5$.}
    \label{labdyckex}
\end{figure}
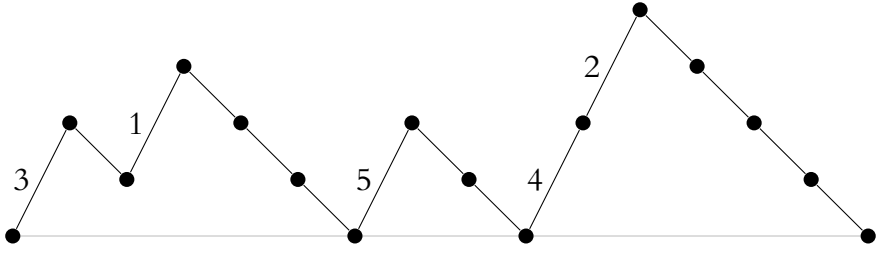

% \textcolor{red}{(K: Should we add a labeled Dyck path example here or are the decorated examples later enough?)}
% \textcolor{blue}{(P: A good idea let there be an example of a labeled Dyck path here.)}

\begin{definition}
A decorated $m$-Dyck path of size $n$ is a labeled $m$-Dyck path of size $n$ where some point on the Dyck path that lies on the $x$-axis is specified.
\end{definition}

We now show that the regions of $\A_n^{(m)}$ are in bijection with decorated $m$-Dyck paths of size $n$.
We first consider the case where $x_i>0$ for all $i \in [n]$.
From \cite[Section 8.1]{ber}, we know that we obtain a labeled $m$-Dyck path from a word in $T_n^{(m)}$ of the form mentioned in \Cref{allpos} by replacing each $\lt{i}{0}$ with an up-step labeled $i$, and all other letters with a down-step.
This is in fact a bijection with labeled $m$-Dyck path of size $n$.
Here we select the point $(0,0)$ to give us the associated decorated Dyck path.

\begin{example}\label{bijecallpos}
The region of $\A_3^{(2)}$ given by
\begin{equation*}
    \textcolor{red}{0}\ \lt{3}{0}\lt{3}{1}\lt{3}{2}\lt{1}{0}\lt{2}{0}\lt{1}{1}\lt{2}{1}\lt{1}{2}\lt{2}{2}
\end{equation*}
has corresponding decorated $2$-Dyck path given in \Cref{allposdyck}.
\end{example}

\begin{figure}[H]
    \centering
    \begin{tikzpicture}[scale=0.75]
    \draw [thin,lightgray] (6,0)--(15,0);
    \node (6) [circle,inner sep=3pt,fill=red] at (6,0) {};
    \node (7) [circle,inner sep=2pt,fill=black] at (7,2) {};
    \node (8) [circle,inner sep=2pt,fill=black] at (8,1) {};
    \node (9) [circle,inner sep=2pt,fill=black] at (9,0) {};
    \node (10) [circle,inner sep=2pt,fill=black] at (10,2) {};
    \node (11) [circle,inner sep=2pt,fill=black] at (11,4) {};
    \node (12) [circle,inner sep=2pt,fill=black] at (12,3) {};
    \node (13) [circle,inner sep=2pt,fill=black] at (13,2) {};
    \node (14) [circle,inner sep=2pt,fill=black] at (14,1) {};
    \node (15) [circle,inner sep=2pt,fill=black] at (15,0) {};
    \draw (6)--node[left] {3} (7)--(8)--(9)--node[left] {1} (10)--node[left] {2} (11)--(12)--(13)--(14)--(15);
    \end{tikzpicture}
    \caption{Decorated Dyck path corresponding to the region of $\A_3^{(2)}$ given in \Cref{bijecallpos}.}
    \label{allposdyck}
\end{figure}
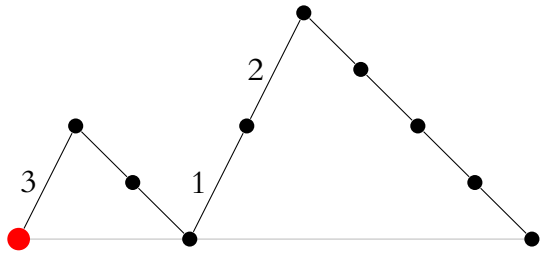

In the general case, we have an $m$-sketch of the form $w_1\ \textcolor{red}{0}\ w_2$.
Here we construct a Dyck path in the same way, except that for $w_1$, we use the letters of the form $\lt{i}{m}$ for the up-steps (recall that the \textit{reverse} of $w_1$ satisfies properties (3) and (4) of \Cref{allpos}).
The point on the $x$-axis that is specified is the one where the Dyck path corresponding to $w_1$ ends.

\begin{example}
The region of $\A_5^{(2)}$ in \Cref{sec3:ex1} has corresponding decorated $2$-Dyck path given in \Cref{dyck}.
\end{example}

\begin{figure}[H]
    \centering
    \begin{tikzpicture}[scale=0.75]
    \draw [thin,lightgray] (0,0)--(15,0);
    \node (0) [circle,inner sep=2pt,fill=black] at (0,0) {};
    \node (1) [circle,inner sep=2pt,fill=black] at (1,2) {};
    \node (2) [circle,inner sep=2pt,fill=black] at (2,1) {};
    \node (3) [circle,inner sep=2pt,fill=black] at (3,3) {};
    \node (4) [circle,inner sep=2pt,fill=black] at (4,2) {};
    \node (5) [circle,inner sep=2pt,fill=black] at (5,1) {};
    \node (6) [circle,inner sep=3pt,fill=red] at (6,0) {};
    \node (7) [circle,inner sep=2pt,fill=black] at (7,2) {};
    \node (8) [circle,inner sep=2pt,fill=black] at (8,1) {};
    \node (9) [circle,inner sep=2pt,fill=black] at (9,0) {};
    \node (10) [circle,inner sep=2pt,fill=black] at (10,2) {};
    \node (11) [circle,inner sep=2pt,fill=black] at (11,4) {};
    \node (12) [circle,inner sep=2pt,fill=black] at (12,3) {};
    \node (13) [circle,inner sep=2pt,fill=black] at (13,2) {};
    \node (14) [circle,inner sep=2pt,fill=black] at (14,1) {};
    \node (15) [circle,inner sep=2pt,fill=black] at (15,0) {};
    \draw (0)-- node[left] {3} (1)--(2)--node[left] {1} (3)--(4)--(5)--(6)--node[left] {5} (7)--(8)--(9)--node[left] {4} (10)--node[left] {2} (11)--(12)--(13)--(14)--(15);
    \end{tikzpicture}
    \caption{Decorated Dyck path corresponding to the region of $\A_5^{(2)}$ given in \Cref{sec3:ex1}.}
    \label{dyck}
\end{figure}

It is also possible to obtain \Cref{numregions} directly from this description of the regions of $\A_n^{(m)}$.
Note that a decorated $m$-Dyck path of size $n$ is equivalent to
\begin{enumerate}
    \item a pair of \textit{unlabeled} $m$-Dyck paths whose total number of up-steps is $n$, and
    \item a permutation of $[n]$.
\end{enumerate}
Using the result from \cite[Example 7.5.5]{conc} that there are $A_k(m,1)$ unlabeled $m$-Dyck paths with $k$ up-steps, we get
\begin{equation*}
    r(\A_n^{(m)}) = n! \cdot \sum_{k=0}^{n}A_k(m,1)A_{n-k}(m,1) = n! \cdot A_n(m,2).
\end{equation*}
The second equality follows from \cite[Equation 7.70]{conc}.
% \textcolor{red}{(K:I think this result is proved in \cite{conc} also.
% Should we just cite that?)}
% \textcolor{blue}{(P: Yes)}.

\begin{remark}
In fact, \cite[Equation 7.70]{conc} shows that $A_n(m,r)$ is the number of $r$-tuples of (possibly empty) unlabeled $m$-Dyck paths whose total number of up-steps is $n$.
\end{remark}

Also, using the fact that there are $A_{n-k}(m,mk)$ unlabeled $m$-Dyck paths with $n$ up-steps and $(k+1)$ points on the $x$-axis (see \cite{rethill}), we get the following result.

\begin{result}
For any $n,m \geq 1$, we have
\begin{equation*}
    r(\A_n^{(m)}) = n! \cdot \sum_{k=1}^{n}(k+1)A_{n-k}(m,mk).
\end{equation*}
\end{result}

We now describe the bijection between the regions of $\A_n^{(m)}$ and certain decorated non-nesting partitions, which will be useful in \Cref{subarrsec}.

% \begin{definition}
% A non-nesting partition of size $n$ is a partition $\Pi$ of $[n]$ where for any $1 \leq i<j<k<l \leq n$, if
% \begin{enumerate}
%     \item $i,l$ are in the same block $B_1$ of $\Pi$,
%     \item $j,k$ are in the same block $B_2$ of $\Pi$,
%     \item $B_1 \cap [i,l] = \{i,l\}$, and $B_2 \cap [j,k] = \{j,k\}$,
% \end{enumerate}
% then we must have $B_1 = B_2$.
% \end{definition}

For any $n \geq 1$, partitions of $[n]$ can be expressed by placing $n$ dots in a row to represent the numbers $1,2,\ldots,n$ in order and drawing arcs to represent the blocks.
If the numbers $i_1 < i_2 < \cdots < i_k$ form a block, then an arc is drawn from $i_j$ to $i_{j+1}$ for all $j \in [k-1]$.
In this case, the partitions where there are no nesting arcs are called \textit{non-nesting partitions}.

\begin{example}
The arc diagrams for the partitions
\begin{enumerate}
    \item $\Pi_1 = \{\{1\} ,\{2,4\},\{3,5,6\}\}$ and
    \item $\Pi_2 = \{\{1,4,5\},\{2\},\{3,6\}\}$
\end{enumerate}
are drawn in \Cref{nonnest1,nonnest2} respectively.
Only $\Pi_1$ is non-nesting.
\end{example}

\begin{figure}[H]
    \centering
    \begin{tikzpicture}
        \node[circle,fill=black,inner sep =2pt] (-6) at (-6+0.5,0) {};
        \node[circle,fill=black,inner sep =2pt] (-5) at (-5+0.5,0) {};
        \node[circle,fill=black,inner sep =2pt] (-4) at (-4+0.5,0) {};
        \node[circle,fill=black,inner sep =2pt] (-3) at (-3+0.5,0) {};
        \node[circle,fill=black,inner sep =2pt] (-2) at (-2+0.5,0) {};
        \node[circle,fill=black,inner sep =2pt] (-1) at (-1+0.45,0) {};
        
        %\draw (-6.north)..controls +(up:7mm) and +(up:7mm)..(-5.north);
        \draw (-5.north)..controls +(up:13mm) and +(up:13mm)..(-3.north);
        \draw (-4.north)..controls +(up:13mm) and +(up:13mm)..(-2.north);
        \draw (-2.north)..controls +(up:7mm) and +(up:7mm)..(-1.north);
    \end{tikzpicture}
    \caption{Arc diagram for the partition $\{\{1\},\{2,4\},\{3,5,6\}\}$.}
    \label{nonnest1}
\end{figure}
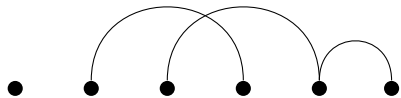

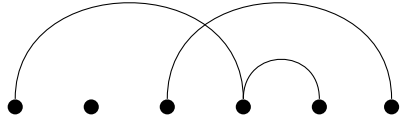
\begin{figure}[H]
    \centering
    \begin{tikzpicture}
        \node[circle,fill=black,inner sep =2pt] (-6) at (-6+0.5,0) {};
        \node[circle,fill=black,inner sep =2pt] (-5) at (-5+0.5,0) {};
        \node[circle,fill=black,inner sep =2pt] (-4) at (-4+0.5,0) {};
        \node[circle,fill=black,inner sep =2pt] (-3) at (-3+0.5,0) {};
        \node[circle,fill=black,inner sep =2pt] (-2) at (-2+0.5,0) {};
        \node[circle,fill=black,inner sep =2pt] (-1) at (-1+0.45,0) {};
        
        \draw (-6.north)..controls +(up:17mm) and +(up:17mm)..(-3.north);
        \draw (-3.north)..controls +(up:7mm) and +(up:7mm)..(-2.north);
        \draw (-4.north)..controls +(up:17mm) and +(up:17mm)..(-1.north);
    \end{tikzpicture}
    \caption{Arc diagram for the partition $\{\{1,4,5\},\{2\},\{3,6\}\}$.}
    \label{nonnest2}
\end{figure}

We now define the decorated non-nesting partitions that correspond to the regions of $\A_n^{(m)}$.

\begin{definition}
A decorated $m$-non-nesting partition of size $n$ is an ordered pair of (possibly empty) non-nesting partitions such that
\begin{enumerate}
    \item the size of each block is $(m+1)$,
    \item the total number of blocks is $n$, and
    \item each block is given a distinct label from $[n]$.
\end{enumerate}
\end{definition}

Just as for regular partitions, decorated non-nesting partitions can be represented by arc diagrams.
If the pair of partitions is $(\Pi_1,\Pi_2)$, we draw the arc diagram for $\Pi_1$ followed by a red line and then the arc diagram for $\Pi_2$.
We represent the label of a block by replacing the dot corresponding to each number in that block by the label.

\begin{example}\label{decnonnestex}
The decorated $2$-non-nesting partition of size $5$ given by $(\Pi_1,\Pi_2)$ where
\begin{enumerate}
    \item $\Pi_1=\{\{1,2,4\},\{3,5,6\}\}$ with first block labeled $3$ and second labeled $1$, and
    \item $\Pi_2=\{\{1,2,3\},\{4,6,8\},\{5,7,9\}\}$ with first block labeled $5$, second labeled $4$, and third labeled $2$
\end{enumerate}
has corresponding arc diagram given in \Cref{decnonnest}.
\end{example}

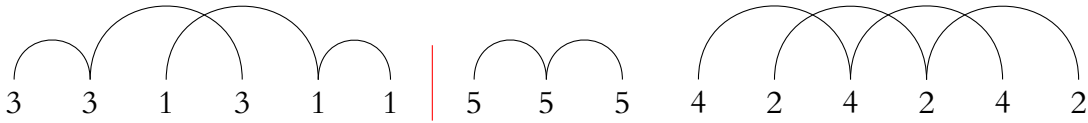
\begin{figure}[H]
    \centering
    \begin{tikzpicture}
        \node (-6) at (-6+0.5,0) {3};
        \node (-5) at (-5+0.5,0) {3};
        \node (-4) at (-4+0.5,0) {1};
        \node (-3) at (-3+0.5,0) {3};
        \node (-2) at (-2+0.5,0) {1};
        \node (-1) at (-1+0.45,0) {1};
        
        \draw [red] (0,-0.25)--(0,0.75);
        
        \node (1) at (1-0.45,0) {5};
        \node (2) at (2-0.5,0) {5};
        \node (3) at (3-0.5,0) {5};
        \node (4) at (4-0.5,0) {4};
        \node (5) at (5-0.5,0) {2};
        \node (6) at (6-0.5,0) {4};
        \node (7) at (7-0.5,0) {2};
        \node (8) at (8-0.5,0) {4};
        \node (9) at (9-0.5,0) {2};
        
        \draw (-6.north)..controls +(up:7mm) and +(up:7mm)..(-5.north);
        \draw (-5.north)..controls +(up:13mm) and +(up:13mm)..(-3.north);
        \draw (-4.north)..controls +(up:13mm) and +(up:13mm)..(-2.north);
        \draw (-2.north)..controls +(up:7mm) and +(up:7mm)..(-1.north);
        
        \draw (1.north)..controls +(up:7mm) and +(up:7mm)..(2.north);
        \draw (2.north)..controls +(up:7mm) and +(up:7mm)..(3.north);
        \draw (4.north)..controls +(up:13mm) and +(up:13mm)..(6.north);
        \draw (6.north)..controls +(up:13mm) and +(up:13mm)..(8.north);
        \draw (5.north)..controls +(up:13mm) and +(up:13mm)..(7.north);
        \draw (7.north)..controls +(up:13mm) and +(up:13mm)..(9.north);
    \end{tikzpicture}
    \caption{Decorated non-nesting partition given in \Cref{decnonnestex}.}
    \label{decnonnest}
\end{figure}

The bijection between $m$-sketches of size $n$ (which we know correspond to regions of $\A_n^{(m)}$) and decorated $m$-non-nesting partitions of size $n$ is fairly straightforward.
Each letter $\lt{i}{k}$ in a sketch is replaced by the label $i$ and $\textcolor{red}{0}$ is replaced by a red line.
For each $i \in [n]$, an arc is drawn joining any occurrence of $i$ with the subsequent occurrence.
Property (1) of \Cref{regiondesc} ensures that there are $n$ blocks labeled distinctly using $[n]$ and that each block is of size $(m+1)$.
Properties (2) and (3) of \Cref{regiondesc} ensure that there is no nesting of arcs.

\begin{example}
The $2$-sketch given in \Cref{sketchex} has corresponding decorated $2$-non-nesting partition given in \Cref{decnonnest}.
\end{example}

\subsection{Sub-arrangements of $\A_n^{(m)}$}\label{subarrsec}

As mentioned in \Cref{sec1}, the arrangement $\A_n^{(m)}$ is a generalization of the arrangement $\alpha_n$ in \cite{seo}.
We now count the regions for generalizations of the other arrangements mentioned in \cite{seo}.

We first consider the arrangement in $\R^n$ given by
\begin{equation*}
    \B_n^{(m)}:=\{x_i=2^kx_j \mid k \in [0,m]\text{ and }i,j \in [n], i \neq j\}.
\end{equation*}
This is a generalization of the arrangement $\beta_n(=\B_n^{(1)})$ considered in \cite{seo}.

The arrangement $\B_n^{(m)}$ is obtained by removing the coordinate hyperplanes from $\A_n^{(m)}$.
Since $\B_n^{(m)}$ is a sub-arrangement of $\A_n^{(m)}$, each region of $\B_n^{(m)}$ breaks up into regions of $\A_n^{(m)}$.
Just as in \cite[Section 8.2]{ber}, we count the regions of $\B_n^{(m)}$ by choosing a canonical region of $\A_n^{(m)}$ from each region of $\B_n^{(m)}$.

To do this, we first have to characterize which regions of $\A_n^{(m)}$ lie in the same region of $\B_n^{(m)}$.
We do this in terms of decorated non-nesting partitions.

\begin{definition}
A block in a decorated $m$-non-nesting partition is called isolated if the $(m+1)$ points that constitute it are consecutive.
Otherwise the block is said to be tangled.
\end{definition}

\begin{example}
The only isolated block in the decorated partition in \Cref{decnonnest} is the one labeled $5$.
\end{example}

In the following proposition, we identify regions of $\A_n^{(m)}$ with $m$-non-nesting partitions of size $n$.
Also, we use arc diagrams to represent decorated non-nesting partitions.

\begin{proposition}\label{bequiv}
Any two decorated $m$-non-nesting partitions of size $n$ lie in the same region of $\B_n^{(m)}$ if and only if
\begin{enumerate}
    \item they at most differ by the position of the red line, and
    \item the position of the red line and any tangled block is the same.
\end{enumerate}
\end{proposition}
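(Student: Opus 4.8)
The plan is to reduce the statement to geometry. Since $\B_n^{(m)}$ is obtained from $\A_n^{(m)}$ by deleting the coordinate hyperplanes $\{x_i=0\}$, two regions of $\A_n^{(m)}$ lie in the same region of $\B_n^{(m)}$ if and only if a point of one can be joined to a point of the other by a path in $\R^n$ avoiding $\bigcup\B_n^{(m)}$. The pairwise intersections of the coordinate hyperplanes with one another and with the hyperplanes of $\B_n^{(m)}$ all have codimension $2$, so by a standard general-position perturbation inside the open region of $\B_n^{(m)}$ we may assume such a path crosses the coordinate hyperplanes transversally, one at a time, each time at a point lying on no other hyperplane of $\A_n^{(m)}$; between consecutive crossings the path avoids all of $\A_n^{(m)}$ and hence stays in a single region of $\A_n^{(m)}$. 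Thus everything reduces to analysing one crossing of a single hyperplane $\{x_i=0\}$.

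Next I would establish the following local dictionary. At a generic point $p$ of $\{x_i=0\}$ we have $x_j(p)\neq 0$ for $j\neq i$, the values $2^\ell x_j(p)$ (for $j\neq i$, $\ell\in[0,m]$) are pairwise distinct and nonzero, while $2^\ell x_i(p)=0$ for all $\ell$. Perturbing $p$ into the side $x_i<0$, the $m+1$ letters $\alpha_i^{(0)},\dots,\alpha_i^{(m)}$ become a cluster of consecutive values immediately below $0$ (ordered $\alpha_i^{(m)}<\dots<\alpha_i^{(0)}<0$) with every other letter in its fixed order and bounded away from $0$; perturbing into $x_i>0$ produces the mirror picture with the cluster immediately above $0$. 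Translating to decorated non-nesting partitions via \Cref{regiondesc}, in both adjacent regions block $i$ is \emph{isolated} and sits right next to the red line, and the two decorated partitions differ only by moving the red line from one side of block $i$ to the other --- the crucial point being that reversing the internal order of an isolated block leaves the underlying arc diagram unchanged, so only the relative position of the red line changes. In particular a crossing of $\{x_i=0\}$ (i) preserves the arc diagram obtained by forgetting the red line, (ii) keeps block $i$ isolated, and (iii) moves the red line past an isolated block only, hence past no point of any tangled block. This is the exact analogue in our setting of the move used by Bernardi in \cite[Section 8.2]{ber}.

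For the forward implication I would compose these crossings along the connecting path, obtaining a sequence $\Pi=\Pi_0,\Pi_1,\dots,\Pi_r=\Pi'$ of decorated $m$-non-nesting partitions with consecutive terms related as above. By (i) all $\Pi_t$ have the same arc diagram, so $\Pi$ and $\Pi'$ differ at most by the position of the red line, which is condition (1); and since the set of tangled blocks is determined by the common arc diagram, (iii) shows that each tangled block lies on the same side of the red line in $\Pi_0$ as in $\Pi_r$, which is condition (2). Conversely, suppose $\Pi,\Pi'$ satisfy (1) and (2). By (1) they share an arc diagram, and by (2) their red lines lie in the same gap determined by the tangled blocks, so they differ only in how the isolated blocks in that gap are split by the red line. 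It then suffices to realise one elementary move: if block $j$ is isolated and immediately to one side of the red line, then moving $x_j$ through $0$ with all other coordinates held fixed (and bounded away from $0$ and from one another) slides the cluster $\alpha_j^{(0)},\dots,\alpha_j^{(m)}$ across $0$ without meeting any other letter, so this path crosses only $\{x_j=0\}$ and remains in one region of $\B_n^{(m)}$, while on the combinatorial side it merely carries the red line past block $j$ and leaves every block isolated or tangled as before. Applying such moves one isolated block at a time, I can walk the red line of $\Pi$ across exactly the isolated blocks separating it from the red-line position of $\Pi'$, which shows $\Pi$ and $\Pi'$ lie in the same region of $\B_n^{(m)}$.

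The main obstacle is the local dictionary of the second paragraph: one must verify carefully that adjacency of two regions of $\A_n^{(m)}$ across a coordinate hyperplane corresponds to the operation ``flip an isolated block adjacent to the red line'' and to nothing more, in particular that such a crossing cannot move or untangle a tangled block. Once this is in place, both implications become bookkeeping with the connecting path and with arc diagrams, and the transversality and general-position statements are routine.
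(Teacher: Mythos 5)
Your proof is correct, but it takes a genuinely different route from the one in the paper. The paper's argument is a static sign-vector computation: a region of $\B_n^{(m)}$ is an intersection of open half-spaces, so two decorated partitions lie in the same region if and only if they impose the same inequality $2^kx_i \lessgtr 2^lx_j$ for every pair of letters from distinct blocks. The paper then checks that this cross-block data is exactly the arc diagram with the red line erased (giving condition (1)), that a tangled block forces the sign of its coordinate through the sandwiching inequalities $2^kx_i<2^lx_j<2^{k+1}x_i$ (giving condition (2)), and that conversely moving the red line past an isolated block disturbs no cross-block inequality, so agreement of the sign vectors --- and hence, by convexity of each sign-vector cell, membership in a common region --- follows. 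You instead run a dynamic wall-crossing argument: connect the two regions by a generic path in the complement of $\B_n^{(m)}$, reduce to a single transversal crossing of one coordinate hyperplane, and identify that crossing with the elementary move ``reflect an isolated block sitting next to the red line.'' Your local dictionary is right (at a generic point of $\{x_i=0\}$ off $\bigcup\B_n^{(m)}$ the letters of block $i$ are the only ones near $0$, so on both sides block $i$ is isolated and adjacent to the red line, and the underlying arc diagram is unchanged because only the internal order of that block reverses), and both implications then follow by composing or realizing these moves. The cost of your approach is the general-position and transversality bookkeeping, which the paper avoids entirely; the payoff is a sharper local statement --- an explicit description of which regions of $\A_n^{(m)}$ are adjacent across a coordinate hyperplane --- and a constructive converse, in the spirit of Bernardi's treatment in \cite{ber}. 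One small point worth making explicit in your forward direction: tangledness is a property of the arc diagram alone, which your moves preserve, and each move changes the side of the red line for exactly one (isolated) block; this is what guarantees a tangled block can never migrate across the red line along the path.
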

\begin{proof}
The positions of the blocks relative to each other describe inequalities corresponding to hyperplanes of the form $x_i=2^kx_j$ for some $k \in [0,m]$ and distinct $i,j \in [n]$.
Hence any two decorated non-nesting partitions in the same region of $\B_n^{(m)}$ must result in the same labeled arc diagram when the red line is deleted.

Next, suppose that we are given a decorated non-nesting partition with a tangled block after the red line.
Suppose the label of the block is $i \in [n]$.
Consider a decorated non-nesting partition obtained by changing the position of the red line so that the block labeled $i$ is before the red line.
We show that these two decorated non-nesting partitions cannot be in the same region of $\B_n^{(m)}$.

Since $i$ is a tangled block, there is some block $j$ with a point between points in the block $i$.
Suppose this corresponds to the inequality $2^kx_i<2^lx_j<2^{k+1}x_i$ for some $k \in [0,m-1]$ and $l \in [0,m]$.
However, when the red line is after the block labeled $i$, we have $2^{k+1}x_i<2^kx_i$ and hence cannot have $2^kx_i<2^lx_j<2^{k+1}x_i$.

This shows that any two decorated non-nesting partitions in the same region of $\B_n^{(m)}$ must satisfy properties (1) and (2) of the statement.
Similarly, it can be shown that any two decorated non-nesting partitions satisfying properties (1) and (2) lie of the same side of hyperplanes of the form $x_i=2^kx_j$ for all $k \in [0,m]$ and distinct $i,j \in [n]$ and hence lie in the same region of $\B_n^{(m)}$.
\end{proof}

\begin{example}
The only decorated non-nesting partition that lies in the same region of $\B_5^{(2)}$ as the one in \Cref{decnonnest} is given in \Cref{decbequiv}.
\end{example}

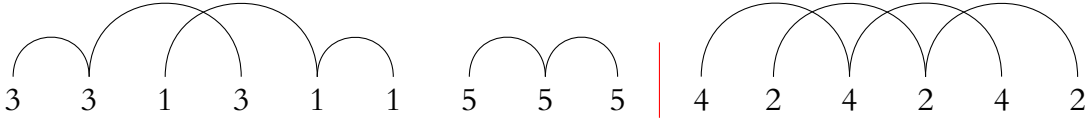
\begin{figure}[H]
    \centering
    \begin{tikzpicture}
        \node (-6) at (-6+0.5,0) {3};
        \node (-5) at (-5+0.5,0) {3};
        \node (-4) at (-4+0.5,0) {1};
        \node (-3) at (-3+0.5,0) {3};
        \node (-2) at (-2+0.5,0) {1};
        \node (-1) at (-1+0.5,0) {1};
        \node (1) at (0+0.5,0) {5};
        \node (2) at (1+0.5,0) {5};
        \node (3) at (2+0.45,0) {5};
        
        \draw [red] (3,-0.25)--(3,0.75);
        
        \node (4) at (4-0.45,0) {4};
        \node (5) at (5-0.5,0) {2};
        \node (6) at (6-0.5,0) {4};
        \node (7) at (7-0.5,0) {2};
        \node (8) at (8-0.5,0) {4};
        \node (9) at (9-0.5,0) {2};
        
        \draw (-6.north)..controls +(up:7mm) and +(up:7mm)..(-5.north);
        \draw (-5.north)..controls +(up:13mm) and +(up:13mm)..(-3.north);
        \draw (-4.north)..controls +(up:13mm) and +(up:13mm)..(-2.north);
        \draw (-2.north)..controls +(up:7mm) and +(up:7mm)..(-1.north);
        
        \draw (1.north)..controls +(up:7mm) and +(up:7mm)..(2.north);
        \draw (2.north)..controls +(up:7mm) and +(up:7mm)..(3.north);
        \draw (4.north)..controls +(up:13mm) and +(up:13mm)..(6.north);
        \draw (6.north)..controls +(up:13mm) and +(up:13mm)..(8.north);
        \draw (5.north)..controls +(up:13mm) and +(up:13mm)..(7.north);
        \draw (7.north)..controls +(up:13mm) and +(up:13mm)..(9.north);
    \end{tikzpicture}
    \caption{Decorated non-nesting partition lying in the same region of $\B_5^{(2)}$ as \Cref{decnonnest}.}
    \label{decbequiv}
\end{figure}

Using \Cref{bequiv}, we can see that each region of $\B_n^{(m)}$ has exactly one region of $\A_n^{(m)}$ whose corresponding decorated non-nesting partition has its red line \textit{not} followed by an isolated block.
Counting such decorated non-nesting partitions gives us the following result, which generalizes \cite[Corollary 3.6]{seo}.

\begin{result}
For any $n,m \geq 1$, we have
\begin{equation*}
    r(\B_n^{(m)}) = n! \cdot (A_n(m,2) - A_{n-1}(m,2)).
\end{equation*}
\end{result}
\begin{proof}
For any $j \geq 1$, there are $A_j(m+1,1)$ non-nesting partitions on $[(m+1)j]$ where each block is of size $(m+1)$.
Hence, the number of $m$-decorated non-nesting partitions of size $n$ where there is an isolated block following the red line is
\begin{equation*}
    n! \cdot \sum_{k=1}^{n}A_{n-k}(m,1)A_{k-1}(m,1) = n! \cdot A_{n-1}(m,2).
\end{equation*}
%Here $k$ represents the number of blocks after the red line.
The result now follows from \Cref{numregionscatalan}.
\end{proof}

Similarly, we consider generalizations of the other arrangements studied in \cite{seo}.

\begin{enumerate}
    \item The arrangement in $\R^n$ given by
    \begin{equation*}
        \Gamma_n^{(m)}:= \A_n^{(m)} \setminus \{x_j=2^mx_i \mid 1 \leq i < j \leq n\}
    \end{equation*}
    is a generalization of the arrangement $\gamma_n$ ($=\Gamma_n^{(1)}$) considered in \cite{seo}.
    
    \item The arrangement in $\R^n$ given by
    \begin{equation*}
        \Delta_n^{(m)}:= \Gamma_n^{(m)} \setminus \{x_i=0 \mid i \in [n]\}
    \end{equation*}
    is a generalization of the arrangement $\delta_n$ ($=\Delta_n^{(1)}$) considered in \cite{seo}.
\end{enumerate}

Again, using techniques from \cite[Section 8.2]{ber}, we can count the regions of these sub-arrangements of $\A_n^{(m)}$.
We state these results without proof.
The enumeration of the regions of $\Gamma_n^{(m)}$ is similar to that of the $m$-Shi arrangement mentioned in \cite{ber}.
The value of $r(\Delta_n^{(m)})$ can be obtained from the one for $r(\Gamma_n^{(m)})$ just as $r(\B_n^{(m)})$ was obtained from $r(\A_n^{(m)})$.

\begin{result}
For any $n,m \geq 1$, we have
\begin{equation*}
    r(\Gamma_n^{(m)}) = \sum_{k=0}^n \binom{n}{k}(mk+1)^{k-1}(m(n-k)+1)^{n-k-1}.
\end{equation*}
\end{result}

\begin{result}
For any $n,m \geq 1$, we have
\begin{equation*}
    r(\Delta_n^{(m)}) = r(\Gamma_n^{(m)}) - \sum_{k=1}^n \binom{n}{k}k(m(k-1)+1)^{k-2}(m(n-k)+1)^{n-k-1}.
\end{equation*}
\end{result}

The ideas that we used to relate the regions of $\A_n^{(m)}$ to those of $\C_n^{(m)}$ can be generalized to obtain the following theorem.

\begin{theorem}
For any finite set of integers $S$, we have for any $n \geq 0$,
\begin{equation*}
    r(\A_n^S) = \sum_{k=0}^n \binom{n}{k} r(\C_k^S)r(\C_{n-k}^S).
\end{equation*}
\end{theorem}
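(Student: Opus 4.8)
The plan is to mimic the decomposition that yielded $r(\A_n^{(m)}) = n! \cdot \sum_{k} A_k(m,1)A_{n-k}(m,1)$, but now at the level of the underlying deformation $\C_n^S$ rather than the specific extended Catalan arrangement. First I would recall that, exactly as in the discussion following \Cref{regiondesc}, a region of $\A_n^S$ is determined by a choice of subsets $B_1, B_2 \subseteq [n]$ with $B_1 \sqcup B_2 = [n]$ (recording which coordinates are negative and which are positive) together with a valid total order on the letters $\{2^k x_i \mid i \in B_1,\ k\}$ and one on $\{2^k x_i \mid i \in B_2,\ k\}$, where validity is now governed by the forbidden differences in $S$. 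Passing to logarithms as before (set $Y_i = \log_2(-x_i)$ for $i \in B_1$ and $X_i = \log_2(x_i)$ for $i \in B_2$), the first order becomes a region of the arrangement $\{X_i - X_j = k \mid k \in -S,\ i \neq j \text{ in } B_1\}$ and the second a region of $\{X_i - X_j = k \mid k \in S,\ i \neq j \text{ in } B_2\}$ on the respective coordinate sets. The sign-flip $k \mapsto -k$ on the first block costs nothing because reflecting all coordinates through the origin is a symmetry of the braid-deformation $\C_{|B_1|}^{S}$ that takes it to $\C_{|B_1|}^{-S}$, and these have the same number of regions (indeed the same intersection poset).

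The key steps, in order, are: (1) set up the bijection just described, sending a region of $\A_n^S$ to the triple $(B_1, R_1, R_2)$ where $R_1$ is a region of $\C_{B_1}^{S}$ (via the reflection symmetry) and $R_2$ a region of $\C_{B_2}^{S}$, with $\C_T^S$ meaning the analogue of $\C_{|T|}^S$ built on coordinates indexed by $T$; (2) observe that this is genuinely a bijection — the signs $B_1,B_2$ are recovered from the region, and given any such triple one reconstructs a consistent total order on all of $T_n^S$ of the form $w_1\,\textcolor{red}{0}\,w_2$ since letters with negative subscripts and letters with positive subscripts never share a hyperplane of $\A_n^S$; (3) count: the number of regions of $\C_T^S$ depends only on $|T|$ (relabelling coordinates is a combinatorial isomorphism), so summing over $B_1$ of size $k$ gives $\sum_{k=0}^n \binom{n}{k} r(\C_k^S) r(\C_{n-k}^S)$, with the $k=0$ and $k=n$ terms handled by the convention $r(\C_0^S) = 1$ (the empty arrangement in $\R^0$ has one region).

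The one genuine subtlety — and the step I would be most careful about — is the $k=0$ boundary case and the claim that "negative" and "positive" letters impose no joint constraints: one must check that no hyperplane $x_i = 2^k x_j$ of $\A_n^S$ with $k \in S$ relates an $x_i < 0$ to an $x_j > 0$, which is immediate since $2^k > 0$ forces $x_i$ and $x_j$ to have the same sign on that hyperplane, so the only interaction between the two sign-classes is through the coordinate hyperplanes $x_i = 0$, which merely fix the partition $(B_1,B_2)$ and separate $w_1$ from $w_2$ by the red $0$. Everything else is routine: the reflection symmetry $x \mapsto -x$ is an honest isometry permuting the hyperplanes of $\C_{B_1}^S$, and the product structure on orderings is exactly the one already exploited in \Cref{sec3}. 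I would also remark that \Cref{charpolygeneralised} gives an alternative, purely polynomial proof — evaluate $\chi_{\A_n^S}(-1) = \chi_{\C_n^S}(-2)$ — but the combinatorial decomposition above is the one that explains the binomial convolution and parallels the treatment of $\B_n^{(m)}$, so that is the proof I would present.
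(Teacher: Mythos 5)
Your decomposition by sign vector into an ordered pair of regions of $\C^S$-type arrangements on the negative and positive coordinate blocks is exactly the argument the paper intends (it states the theorem as the direct generalization of the ideas used for $\A_n^{(m)}$ in \Cref{sec3}), and your handling of the cross-sign hyperplanes, the relabelling of coordinate subsets, and the $k=0,n$ boundary cases via $r(\C_0^S)=1$ is correct. The only cosmetic point is that with the substitution $Y_i=\log_2(-x_i)$ the negative block already carries a copy of $\C_{|B_1|}^{S}$ rather than $\C_{|B_1|}^{-S}$, so the reflection $x\mapsto -x$ you invoke is harmless but not actually needed.
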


\begin{remark}
Bernardi \cite[Theorem 8.8]{ber} has shown that if the set $S$ satisfies certain conditions (see \cite[Definition 3.5]{ber}), the regions of $\C_n^S$ are in bijection with certain decorated non-nesting partitions.
Just as in the case of $\A_n^{(m)}$, this bijection can be extended to one for the regions of $\A_n^S$ by considering pairs of non-nesting partitions: one for the positive coordinates and one for the negative.
% \textcolor{red}{(K: Should this be explained in more detail?)}
% \textcolor{blue}{(P: Hoping its not too long; this way we will close a loop. If its too long maybe just an overview)}
% \textcolor{red}{(K: I was wondering if we should define transitive sets and state the bijection with locally maximal sketches.
% Maybe that would be too long?
% Shall we just add the following to the last sentence?
% `by considering pairs of non-nesting partitions: one for the positive coordinates and one for the negative'.
% )}
\end{remark}

% In fact, several sub-arrangements of $\A_n^{(m)}$ can be studied by looking at the corresponding sub-arrangement of the extended Catalan arrangement.
% \textcolor{red}{(K:Is this kind of statement enough or should we include the explicit result for a transitive set $S$? It is something like: the number of regions of the sub-arrangmenet of $\A_n^{(m)}$ is \begin{equation*}
%     \sum_{k=0}^n \binom{n}{k}a_S(k)a_S(n-k).
% \end{equation*}
% Here $k$ is the number of blocks before the red line and $a_S(k)$ is the number of regions ($S$-maximal sketches) for the corresponding Catalan sub-arrangement in $\R^k$.
% Though a bijection with objects might not be possible, this count I think works for general $S$.)}

\section{A statistic on decorated Dyck paths}\label{sec4}

In this section, we define a statistic on decorated $m$-Dyck paths of size $n$ whose distribution is given by the coefficients of the characteristic polynomial of $\A_n^{(m)}$.

From \Cref{charpolyrel}, we know that $\chi_{\A_n^{(m)}}(t)=\chi_{\C_n^{(m)}}(t-1)$.
This allows us to use the interpretation of the coefficients of $\chi_{\C_n^{(m)}}(t)$  from \cite{branch} to describe our required statistic.
We first recall this interpretation.

We slightly extend the definition of a labeled $m$-Dyck path of size $n$ (see \Cref{dyckdef}) by omitting the condition that the labels on the up-steps should be $[n]$.
However, we still assume that the labels are distinct positive integers.

A labeled Dyck path breaks up into \textit{primitive} parts based on when it touches the $x$-axis.
If a labeled Dyck path has $k$ primitive parts with label set $S$, then we break the path into \textit{compartments} as follows.
If the number largest label is in the $i_1^{th}$ primitive part, then the primitive parts up to the $i_1^{th}$ form the first compartment.
Let $j$ be the largest number in $S \setminus A$ where $A$ is the set of labels in first compartment.
If $j$ is in the $i_2^{th}$ primitive part then the primitive parts after the $i_1^{th}$ up to the $i_2^{th}$ form the second compartment.
Continuing this way, we break up a labeled Dyck path into compartments.

\begin{example}
The labeled $1$-Dyck path given in \Cref{dyckcomp} has $3$ primitive parts and $2$ compartments.
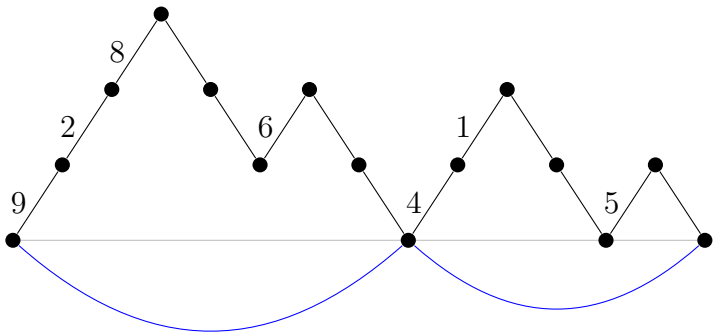
\begin{figure}[!htbp]
    \centering
    \begin{tikzpicture}[xscale=0.65]
    \draw [thin,lightgray] (8,0)--(22,0);
    %\draw [dashed,red] (0,-1)--(0,1);
    %\draw [dashed,red] (16,-1)--(16,1);
    %\draw [dashed,red] (22,-1)--(22,1);
    % \node (0) [circle,inner sep=2pt,fill=black] at (0,0) {};
    % \node (1) [circle,inner sep=2pt,fill=black] at (1,1) {};
    % \node (2) [circle,inner sep=2pt,fill=black] at (2,0) {};
    % \node (3) [circle,inner sep=2pt,fill=black] at (3,1) {};
    % \node (4) [circle,inner sep=2pt,fill=black] at (4,2) {};
    % \node (5) [circle,inner sep=2pt,fill=black] at (5,1) {};
    % \node (6) [circle,inner sep=2pt,fill=black] at (6,2) {};
    % \node (7) [circle,inner sep=2pt,fill=black] at (7,1) {};
    \node (8) [circle,inner sep=2pt,fill=black] at (8,0) {};
    \node (9) [circle,inner sep=2pt,fill=black] at (9,1) {};
    \node (10) [circle,inner sep=2pt,fill=black] at (10,2) {};
    \node (11) [circle,inner sep=2pt,fill=black] at (11,3) {};
    \node (12) [circle,inner sep=2pt,fill=black] at (12,2) {};
    \node (13) [circle,inner sep=2pt,fill=black] at (13,1) {};
    \node (14) [circle,inner sep=2pt,fill=black] at (14,2) {};
    \node (15) [circle,inner sep=2pt,fill=black] at (15,1) {};
    \node (16) [circle,inner sep=2pt,fill=black] at (16,0) {};
    \node (17) [circle,inner sep=2pt,fill=black] at (17,1) {};
    \node (18) [circle,inner sep=2pt,fill=black] at (18,2) {};
    \node (19) [circle,inner sep=2pt,fill=black] at (19,1) {};
    \node (20) [circle,inner sep=2pt,fill=black] at (20,0) {};
    \node (21) [circle,inner sep=2pt,fill=black] at (21,1) {};
    \node (22) [circle,inner sep=2pt,fill=black] at (22,0) {};
    %\draw (0)--node[left] {$3$}(1)--(2)--node[left] {$10$}(3)--node[left] {$7$}(4)--(5)--node[left] {$11$}(6)--(7);
    \draw (8)--node[left] {$9$}(9)--node[left] {$2$}(10)--node[left] {$8$}(11)--(12)--(13)--node[left] {$6$}(14)--(15)--(16)--node[left] {$4$}(17)--node[left] {$1$}(18)--(19)--(20)--node[left] {$5$}(21)--(22);
    %\draw [blue,edgee] (0) to [bend right] (8);
    \draw [blue,edgee] (8) to [bend right] (16);
    \draw [blue,edgee] (16) to [bend right] (22);
    \end{tikzpicture}
    \caption{A labeled $1$-Dyck path with compartments specified.}
    \label{dyckcomp}
\end{figure}
\end{example}

It can be checked that knowing the compartments (without the order in which they appear) is enough to reconstruct a labeled Dyck path.
Hence they form ``connected components'' of the labeled Dyck path.
This means that a labeled Dyck path with label set $S$ is specified by a partition of $S$ along with a connected Dyck path structure (Dyck path with one compartment) on each block of the partition.

We have the following result from \cite{branch}.

\begin{theorem}[{\cite[Result 4.13]{branch}}]
The absolute value of the coefficient of $t^j$ in $\chi_{\C_n^{(m)}}(t)$ is the number of labeled $m$-Dyck paths with label set $[n]$ which have $j$ compartments.
\end{theorem}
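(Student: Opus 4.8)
Let $f_{n,j}$ denote the number of labeled $m$-Dyck paths with label set $[n]$ having exactly $j$ compartments. Athanasiadis's formula $\chi_{\C_n^{(m)}}(t)=t(t-mn-1)(t-mn-2)\cdots(t-mn-n+1)$, recalled above, presents $\chi_{\C_n^{(m)}}$ as a monic polynomial all of whose roots $0,mn+1,\dots,mn+n-1$ are non-negative; hence its coefficient of $t^j$ has sign $(-1)^{n-j}$, and its absolute value is the coefficient of $t^j$ in $t(t+mn+1)(t+mn+2)\cdots(t+mn+n-1)$. So the theorem is equivalent to the polynomial identity
\begin{equation*}
  \sum_{j} f_{n,j}\,t^j \;=\; t\,(t+mn+1)(t+mn+2)\cdots(t+mn+n-1).
\end{equation*}

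I would prove this with exponential generating functions. Put $C(x):=\sum_{k\ge 0}A_k(m,1)\,x^k$. A labeled $m$-Dyck path on $[n]$ is an unlabeled $m$-Dyck path of size $n$ with its $n$ up-steps labeled bijectively by $[n]$, so there are $n!\,A_n(m,1)$ of them and $C(x)$ is their exponential generating function. By the compartment decomposition recalled just before the theorem, such a path is the same datum as a set partition of its label set together with a connected (one-compartment) labeled $m$-Dyck path on each block, the number of compartments being the number of blocks. Writing $P(x)$ for the exponential generating function of connected labeled $m$-Dyck paths, the exponential formula gives $C(x)=\exp\!\big(P(x)\big)$, and, weighting each compartment by a variable $t$,
\begin{equation*}
  \sum_{n\ge 0}\Big(\sum_{j} f_{n,j}\,t^j\Big)\frac{x^n}{n!}\;=\;\exp\!\big(t\,P(x)\big)\;=\;C(x)^{\,t}.
\end{equation*}

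It then remains to compute $n!\,[x^n]C(x)^t$. The series $C$ satisfies the Fuss--Catalan functional equation $C(x)=1+x\,C(x)^{m+1}$, which also governs the enumeration of unlabeled $m$-Dyck paths used earlier. Writing $B=C-1$, so that $B=x(1+B)^{m+1}$, Lagrange inversion applied to $(1+B)^t$ gives
\begin{equation*}
  [x^n]\,C(x)^t \;=\; \frac{t}{n}\binom{(m+1)n+t-1}{n-1}\;=\;\frac{t}{n!}\prod_{i=1}^{n-1}\big(t+(m+1)n-i\big),
\end{equation*}
equivalently the Raney-number identity $[x^n]C(x)^r=A_n(m,r)=\frac{r}{(m+1)n+r}\binom{(m+1)n+r}{n}$, valid as an identity of polynomials in the exponent and already invoked in the paper. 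Since $(m+1)n-i=mn+(n-i)$ runs over $mn+1,\dots,mn+n-1$ as $i$ runs over $1,\dots,n-1$, we get $n!\,[x^n]C(x)^t=t(t+mn+1)(t+mn+2)\cdots(t+mn+n-1)$, which is the required identity.

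The essential input is the compartment decomposition recalled before the theorem, namely that the compartments of a labeled $m$-Dyck path behave as its connected components, so that such paths are exactly the sets of connected ones supported on a partition of the label set; I expect that (established in the cited reference) to be the only genuinely nontrivial ingredient, after which the argument is the short generating-function computation above. The only places needing a little care are the sign bookkeeping in the reduction of the first paragraph and performing the Lagrange inversion so that the resulting product telescopes exactly onto $t(t+mn+1)\cdots(t+mn+n-1)$.
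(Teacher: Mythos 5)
Your proposal is correct. Note first that the paper does not actually prove this statement: it is imported wholesale by citation from the companion paper \cite{branch}, so there is no in-paper argument to compare against. Your derivation is a legitimate self-contained proof, resting on exactly one combinatorial fact that the present paper does state (without proof) just before the theorem, namely that a labeled $m$-Dyck path with label set $S$ is equivalent to a partition of $S$ together with a connected (one-compartment) labeled path on each block. Granting that, your chain is sound: the exponential formula gives $\sum_{n,j} f_{n,j}t^j x^n/n! = \exp(tP(x)) = C(x)^t$ with $C(x)=\sum_k A_k(m,1)x^k$ satisfying $C=1+xC^{m+1}$; Lagrange inversion (or the Raney identity $[x^n]C^r=A_n(m,r)$, which the paper already invokes via \cite[Equation 7.70]{conc}, extended from positive integers $r$ to a polynomial identity in $t$ since $[x^n]\exp(tP)$ is a polynomial in $t$) yields $n!\,[x^n]C^t = t(t+mn+1)\cdots(t+mn+n-1)$; and the sign bookkeeping from the factored form of $\chi_{\C_n^{(m)}}$ is handled correctly. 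I checked the small cases ($n=2$, $m=1$ gives $t^2+3t$ against the four labeled paths, of which three have one compartment) and the telescoping of the Lagrange-inversion product; both come out right. What your route buys is independence from the cited reference's machinery: everything reduces to the compartment decomposition plus a standard generating-function computation, which is arguably cleaner than deferring to an external result, though you should still either prove or explicitly cite the compartment-reconstruction lemma rather than treating it as folklore.
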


We now turn to decorated Dyck paths.
Just as for non-nesting partitions, a decorated $m$-Dyck path of size $n$ can be seen as an ordered pair of labeled $m$-Dyck paths having disjoint label sets whose union is $[n]$.
From the discussion above, this is the same as a collection of connected labeled Dyck paths having disjoint labels sets whose union is $[n]$ along with a choice of some of these Dyck paths.
This can be seen by breaking up each labeled Dyck path in the pair into connected components and specifying which ones are from the second Dyck path.

We now describe the required statistic on decorated Dyck paths.
In the following theorem, we view decorated Dyck paths as a pair of labeled Dyck paths.

\begin{theorem}
The absolute value of the coefficient of $t^j$ in $\chi_{\A_n^{(m)}}(t)$ is the number of decorated $m$-Dyck paths of size $n$ for which the second labeled Dyck path has $j$ compartments.
\end{theorem}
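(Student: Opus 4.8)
The plan is to combine the substitution identity $\chi_{\A_n^{(m)}}(t)=\chi_{\C_n^{(m)}}(t-1)$ of \Cref{charpolyrel} with the interpretation of the coefficients of $\chi_{\C_n^{(m)}}$ recalled just above. Write $b_\ell$ for the number of labeled $m$-Dyck paths with label set $[n]$ having exactly $\ell$ compartments, so that the theorem quoted above from \cite{branch} reads $\chi_{\C_n^{(m)}}(t)=\sum_{\ell\ge 0}(-1)^{n-\ell}b_\ell\,t^\ell$. Substituting $t-1$ for $t$ and expanding $(t-1)^\ell$ by the binomial theorem, the coefficient of $t^j$ in $\chi_{\A_n^{(m)}}(t)$ equals
\[
\sum_{\ell\ge j}(-1)^{n-\ell}\binom{\ell}{j}(-1)^{\ell-j}b_\ell=(-1)^{n-j}\sum_{\ell\ge j}\binom{\ell}{j}b_\ell .
\]
By \eqref{charform} the sign $(-1)^{n-j}$ is forced, so it remains to prove that the number of decorated $m$-Dyck paths of size $n$ whose second labeled Dyck path has $j$ compartments equals $\sum_{\ell\ge j}\binom{\ell}{j}b_\ell$.

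I would do this by constructing a bijection between such decorated $m$-Dyck paths and pairs $(P,C)$, where $P$ is a labeled $m$-Dyck path with label set $[n]$ and $C$ is a $j$-element subset of the set of compartments of $P$; by the definition of $b_\ell$ there are exactly $\sum_{\ell\ge j}\binom{\ell}{j}b_\ell$ such pairs. Given a decorated $m$-Dyck path, regard it (as in the discussion preceding the statement) as an ordered pair $(P_1,P_2)$ of labeled $m$-Dyck paths with disjoint label sets whose union is $[n]$, and recall that each $P_r$ is determined by its unordered collection of compartments --- equivalently, by a partition of its label set together with a connected Dyck-path structure on each block. Taking the union of the two collections of compartments yields a partition of $[n]$ with a connected structure on each block, hence (by the reconstruction property of labeled Dyck paths from their compartments) a unique labeled $m$-Dyck path $P$ on $[n]$ whose compartments are exactly these blocks; let $C$ be the subset of compartments of $P$ that came from $P_2$. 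The inverse map sends $(P,C)$ to the pair in which $P_2$ is the labeled Dyck path with compartment set $C$ and $P_1$ the one with compartment set the complement of $C$. Since the number of compartments of $P_2$ is $|C|=j$, the claim follows.

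The only step requiring genuine care --- and the one I expect to be the main obstacle --- is checking that this ``merging of compartments'' is well defined, i.e. that the correspondence from \cite{branch} between labeled $m$-Dyck paths with label set $S$ and partitions of $S$ equipped with a connected Dyck-path structure on each block is compatible with disjoint unions: one must verify that the compartments of the merged path $P$ on $[n]$ are precisely the disjoint union of the compartments of $P_1$ and of $P_2$, with no two of them amalgamating into a larger compartment. This should be a short verification once the cited correspondence is phrased at the level of partitions with structured blocks, and the rest of the argument is routine bookkeeping.
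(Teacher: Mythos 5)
Your proposal is correct and follows essentially the same route as the paper: both derive the identity $P(m,n,j)=\sum_{\ell\ge j}\binom{\ell}{j}b_\ell$ from \Cref{charpolyrel} together with the sign-alternation of characteristic polynomial coefficients, and both then invoke the fact that a decorated $m$-Dyck path is equivalent to a labeled $m$-Dyck path on $[n]$ together with a chosen subset of its compartments. The only difference is one of explicitness --- you spell out the compartment-merging bijection and flag the (genuinely short) verification that compartments do not amalgamate, whereas the paper delegates this to the ``alternate description'' discussed just before the theorem statement.
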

\begin{proof}
We set $C(m,n,j)$ to be the absolute value of the coefficient of $t^j$ in $\chi_{\C_n^{(m)}}(t)$ and $P(m,n,j)$ to be the absolute value of the coefficient of $t^j$ in $\chi_{\A_n^{(m)}}(t)$.
From \Cref{charpolyrel} and the fact that the coefficients of the characteristic polynomial of any arrangement alternate in sign (see \cite[Corollary 3.4]{stanarr07}), we get
\begin{equation*}
    P(m,n,j) = \sum_{i=j}^n C(m,n,i)\binom{i}{j}.
\end{equation*}
The result now follows from the alternate description of decorated Dyck paths mentioned before the statement of the theorem.
\end{proof}

\begin{example}
The second labeled Dyck path in the decorated Dyck path given in \Cref{dyck} has $2$ compartments.
\end{example}

\begin{remark}
If $S$ is a transitive set (see \cite[Definition 3.5]{ber}) an interpretation of the coefficients of $\chi_{\C_n^S}(t)$ is given in \cite[Theorem 3.5]{branch}.
Just as above, this can be used along with \Cref{charpolygeneralised} to obtain an interpretation for the coefficients of $\chi_{\A_n^S}(t)$.
% \textcolor{red}{(K: Should this be explained in more detail?)}
% \textcolor{blue}{(P: Maybe not, that will take too much space with possible repetition of ideas. Or maybe we should, I am not sure yet :-) )}
% \textcolor{red}{(K: I think if we include the transitive and locally maximal sketch definition in the last section, then adding a bit more detail here would be easy.)}
\end{remark}

%\bibliographystyle{abbrv} % bibliography style
%\bibliography{boxth} % References file

\end{document}